\documentclass[11pt]{article}
\usepackage{amsfonts}
\usepackage{amsmath}

\newtheorem{theorem}{Theorem}

\newtheorem{corollary}[theorem]{Corollary}

\newtheorem{lemma}[theorem]{Lemma}

\newtheorem{remark}[theorem]{Remark}

\newenvironment{proof}{\noindent\\ \noindent\relax{\sc
     Proof}}{{\samepage\par\nopagebreak\hbox
     to\hsize{\hfill$\Box$}}}

\begin{document}

\title{Limit theorems for decomposable branching processes in a random
environment}
\author{Vladimir Vatutin\thanks{
Steklov Mathematical institute RAS, Gubkin str. 8, Moscow, 119991,
Russia;
e-mail: vatutin@mi.ras.ru},\, Quansheng Liu \thanks{%
Univ Bretagne - Sud, UMR 6205, LMBA, F-56000 Vannes, France;
e-mail: \uppercase{Q}uansheng.\uppercase{L}iu@univ-ubs.fr}}
\date{}
\maketitle

\begin{abstract}
We study the asymptotics of the survival probability for the
critical and decomposable branching processes in random
environment and prove Yaglom type limit theorems for these
processes. It is shown that such processes possess some properties
having no analogues for the decomposable branching processes in
constant environment
\end{abstract}

\textbf{Keywords} Decomposable branching processes; survival
probability; random environment

 \textbf{AMS Subject
Classification} 60J80, 60F17; 60J85

\section{Introduction}

The multitype branching processes in random environment we
consider here can be viewed as a discrete-time stochastic model
for the sizes of a geographically structured population occupying
islands labelled $0,1,...,N.$ One unit of time represents a
generation of particles (individuals). Particles located on island
$0$ give birth under influence of a randomly changing environment.
They may migrate to one of the islands $1,2,...,N$ immediately
after birth, with probabilities again depending upon the current
environmental state. Particles of island $i\in $ $\left\{
1,2,...,N-1\right\} $ either stay at the same island or migrate to
the islands $i+1,2,...,N$ and their reproduction laws are not
influenced by any changing environment. Finally, particles of
island $N$ do not migrate and evolve in a constant environment.

The goal of this paper is to investigate the asymptotic behavior
of the survival probability of the whole process and the
distribution of the number of particles in the population given
its survival or survival of particles of type 1.

Let $m_{ij}$ be the mean number of type $j$ particles produced by
a type $i$ particle at her death.

We formulate our main assumptions as

\textbf{Hypothesis }$A0:$

\begin{itemize}
\item particles of type $0$ form (on their own) a critical
branching process in a random environment;

\item particles of any type $i\in \left\{ 1,2,...,N\right\} $ form
(on their
own) a critical branching process in a constant environment, i.e., $m_{ii}=1$%
;

\item particles of any type $i$ are able to produce descendants of
all the next in order types (may be not as the direct descendants)
but not any
preceding ones. In particular, $m_{ij}=0$ for $1\leq j<i\leq N$ and $%
m_{i,i+1}>0$ for $i=1,...,N-1$.
\end{itemize}

Let $X_{n}$ be the number of particles of type $0$ and $\mathbf{Z}%
_{n}=\left( Z_{n1},...,Z_{nN}\right) $ be the vector of the
numbers of particles type $1,2,...,N$, respectively, present at
time $n$. Throughout of this paper considering the $( N+1)-$type
branching process it is
assumed (unless otherwise specified) that $X_{0}=1$ and $\mathbf{Z}%
_{0}=( 0...,0) =\mathbf{0}$.

We investigate asymptotics of the survival probability of this process as $%
n\rightarrow \infty $ and the distribution of the number of
particles in the process at moment $n$ given $Z_{n1}>0$ or
$\mathbf{Z}_{n}$ $\neq \mathbf{0}.$ Note that the asymptotic
behavior of the survival probability for the case $N=1$ has been
investigated in \cite{VDJS} under stronger assumptions than those
imposed in the present paper. The essential novelty of this paper
are Yaglom-type limit theorems for the population vector
$\mathbf{Z}_{n}$ (see Theorem \ref{T_main}\ below).

The structure of the remaining part of this paper is as follows. In Section %
\ref{Sfix1} we recall known facts for decomposable branching
processes in constant environments and show some preliminary
results. Section \ref{SecREnvironm} deals with the $(N+1)-$type
decomposable branching processes in random environment. Here we
study the asymptotic behavior of the survival probability and
prove a Yaglom-type conditional limit theorem for the number of
particles in the process given $Z_{n1}>0$. In Section
\ref{Secthree} we consider a 3$-$type decomposable branching
process in random environment and, proving a
Yaglom-type conditional limit theorem under the condition $Z_{n1}+Z_{n2}>0,$%
\textbf{\ }show the essential difference of such processes with
the decomposable processes evolving in constant environment.

\section{Multitype decomposable branching processes in a constant
environment \label{Sfix1}}

The aim of this section is to present a number of known results
about the decomposable branching processes we are interesting in
the case of a
constant environment and, therefore, we do not deal with particles of type $%
0 $.

If Hypothesis $A0$ is valid then the mean matrix of our process
has the form

\begin{equation}
\mathbf{M=}\left( m_{ij}\right) =\left(
\begin{array}{ccccc}
1 & m_{12} & ... & ... & m_{1N} \\
0 & 1 & m_{23} & ... & m_{2N} \\
0 & 0 & 1 & ... & ... \\
... & ... & ... & ... & ... \\
... & ... & ... & ... & m_{N-1,N} \\
0 & 0 & ... & 0 & 1%
\end{array}%
\right),   \label{matrix1}
\end{equation}%
where
\begin{equation}
m_{i,i+1}>0,\ i=1,2,...,N-1.  \label{Maseq}
\end{equation}%
Under conditions (\ref{matrix1}) and (\ref{Maseq}) one obtains a
complete ordering $1\longrightarrow 2\longrightarrow
...\longrightarrow N$ of types.

Observe that according to the classification given in \cite{FN2}
the process we consider is \textit{strongly critical.}

In the sequel we need some results from \cite{FN} and \cite{FN2}.
To this aim we introduce additional notation.

1) For any vector $\mathbf{s}=(s_{1},...,s_{p})$ (the dimension
will usually
be clear from the context), and integer valued vector $\mathbf{k}%
=(k_{1}.....k_{p})$ define%
\begin{equation*}
\mathbf{s}^{\mathbf{k}}=s_{1}^{k_{1}}...s_{p}^{k_{p}}.
\end{equation*}%
Further, let $\mathbf{1}=\left( 1,...,1\right) $ be a vector of
units and let $\mathbf{e}_{i}$ be a vector whose $i$-th component
is equal to one while the remaining are zeros.

2) The first and second moments of the components of the population vector $%
\mathbf{Z}_{n}=\left( Z_{n1},...,Z_{nN}\right) $ will be denoted as%
\begin{equation*}
m_{il}(n):=\mathbf{E}\left[
Z_{nl}|\mathbf{Z}_{0}=\mathbf{e}_{i}\right] ,\ m_{il}:=m_{il}(1),
\end{equation*}%
\begin{equation}
b_{ikl}(n):=\mathbf{E}\left[ Z_{nk}Z_{nl}-\delta _{kl}Z_{nl}|\mathbf{Z}_{0}=%
\mathbf{e}_{i}\right] ,\ b_{ikl}:=b_{ikl}(1).  \label{Defq}
\end{equation}

To go further we introduce probability generating functions

\begin{equation}
h^{(i,N)}(\mathbf{s}) :=\mathbb{E}\left[ \prod_{k=i}^N s_{k}^{\eta
_{ik}}\right], \quad 1\leq i\leq N, \label{DefNONimmigr}
\end{equation}%
where $\eta _{ij}$ represents the number of daughters of type $j$
of a mother of type $i\in\{1,2,...,N\}.$ Let
\begin{equation}
H_{n}^{(i,N)}(\mathbf{s}):=\mathbb{E}\left[ \prod_{k=i}^N
s_k^{Z_{nk}}|\mathbf{Z}_{0}=\mathbf{e}%
_{i}\right] ,\quad 1\leq i\leq N,
\end{equation}%
be the probability generating functions for the vector of the
number of particles at moment $n$ given the process is initiated
at time $0$ by a
singly particle of type $i\in \left\{ 1,2,...,N\right\} $ . Clearly, $%
H_{1}^{(i,N)}(\mathbf{s})=h^{(i,N)}(\mathbf{s})$. Denote
\begin{eqnarray*}
\mathbf{H}_{n}(\mathbf{s}) &:&=\left( H_{n}^{(1,N)}(\mathbf{s}%
),...,H_{n}^{(N,N)}(\mathbf{s})\right) , \\
\mathbf{Q}_{n}(\mathbf{s}) &:&=\left( Q_{n}^{(1,N)}(\mathbf{s}%
),...,Q_{n}^{(N,N)}(\mathbf{s})\right) =\left( 1-H_{n}^{(1,N)}(\mathbf{s}%
),...,1-H_{n}^{(N,N)}(\mathbf{s})\right) .
\end{eqnarray*}

As usually, for two sequences $a_n,b_n$ we write $a_n\sim b_n,
a_n=O(b_n),\,a_n=o(b_n)$ and $a_n\asymp b_n$ meaning that these
relationships are valid as $n\to\infty.$ In particular, $a_n\asymp
b_n$ if and only if
$$0<\liminf_{n\to\infty}a_n/b_n\leq\limsup_{n\to\infty}a_n/b_n<\infty.
$$

The following theorem is a simplified combination of the
respective results from \cite{FN} and \cite{FN2}:

\begin{theorem}
\label{T_Foster}Let $\left\{ \mathbf{Z}_{n},n=0,1,...\right\} $ be
a strongly critical multitype branching process satisfying
(\ref{matrix1}) and
(\ref{Maseq}). Then, as $n\rightarrow \infty $%
\begin{equation}
m_{il}(n)\sim c_{il}n^{l-i},\ i\leq l,  \label{MomentSingle3}
\end{equation}%
where $c_{il}$ are positive constants known explicitly (see
\cite{FN2}, Theorem 1);

2) if $b_{ikl}<\infty ,\ i,k,l=1,...,N$ then%
\begin{equation}
b_{ikl}(n)\sim c_{ikl}n^{k+l-2i+1},  \label{MomentSingle2}
\end{equation}%
where $c_{ikl}$ are constants known explicitly (see \cite{FN2},
Theorem 1) and\ \
\begin{equation}
Q_{n}^{(i,N)}(\mathbf{0})=1-H_{n}^{(i,N)}(\mathbf{0})=\mathbf{P}(\mathbf{Z}%
_{n}\neq \mathbf{0}|\mathbf{Z}_{0}=\mathbf{e}_{i})\sim
c_{i}n^{-2^{-(N-i)}},c_{i}>0.  \label{SurvivSingle}
\end{equation}
\end{theorem}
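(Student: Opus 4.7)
The plan is to treat the three parts in order of increasing difficulty, with the bulk of the work going into the survival asymptotic (\ref{SurvivSingle}), which is the only genuinely delicate statement.

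For the first moments (\ref{MomentSingle3}), note that $m_{il}(n)$ is simply the $(i,l)$-entry of $\mathbf{M}^n$, and $\mathbf{M}=\mathbf{I}+\mathbf{A}$ where $\mathbf{A}$ is strictly upper triangular and hence nilpotent with $\mathbf{A}^N=\mathbf{0}$. Expanding $\mathbf{M}^n=\sum_{k=0}^{N-1}\binom{n}{k}\mathbf{A}^k$, I would observe that the $(i,l)$-entry of $\mathbf{A}^k$ is a sum over strictly increasing chains $i=i_0<i_1<\cdots<i_k=l$ of $\prod_r m_{i_r,i_{r+1}}$, so it is nonzero only for $k\le l-i$, with $k=l-i$ forcing unit steps along the unique chain $i\to i+1\to\cdots\to l$. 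Hence the dominant term is $\binom{n}{l-i}\prod_{j=i}^{l-1}m_{j,j+1}\sim c_{il}n^{l-i}$ with $c_{il}=\prod_{j=i}^{l-1}m_{j,j+1}/(l-i)!$.

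For the second moments (\ref{MomentSingle2}), conditioning on the first generation yields a recursion of the schematic form
\begin{equation*}
b_{ikl}(n+1)=\sum_{j\ge i}m_{ij}\,b_{jkl}(n)+\sum_{j,j'\ge i}b_{ijj'}\,m_{jk}(n)\,m_{j'l}(n),
\end{equation*}
where the second sum is the contribution from distinct ancestral lines. By (\ref{MomentSingle3}) the source term $\sum_{j,j'}b_{ijj'}m_{jk}(n)m_{j'l}(n)$ is dominated by $j=j'=i$, giving $\sim b_{iii}c_{ik}c_{il}n^{k+l-2i}$. Iterating the recursion and using the triangular structure to discard lower-order contributions from $j>i$, summation over the $n$ steps produces the claimed $n^{k+l-2i+1}$ rate, with coefficient $c_{ikl}=b_{iii}c_{ik}c_{il}/(k+l-2i+1)$ (plus corrections one checks by induction on $l-i+k-i$).

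The main obstacle is (\ref{SurvivSingle}), which I would prove by downward induction on $i$, starting from $i=N$. Writing $q_n^{(i)}:=Q_n^{(i,N)}(\mathbf{0})$ and using $H_{n+1}^{(i,N)}(\mathbf{0})=h^{(i,N)}(H_n^{(i,N)}(\mathbf{0}),\ldots,H_n^{(N,N)}(\mathbf{0}))$, a second-order Taylor expansion of $h^{(i,N)}$ around $\mathbf{1}$ yields, using $m_{ii}=1$,
\begin{equation*}
q_{n+1}^{(i)}-q_n^{(i)}=\sum_{j>i}m_{ij}q_n^{(j)}-\tfrac12\sum_{j,k\ge i}b_{ijk}q_n^{(j)}q_n^{(k)}+R_n^{(i)},
\end{equation*}
with $R_n^{(i)}$ a remainder of cubic order. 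The base case $i=N$ is the classical Kolmogorov argument giving $q_n^{(N)}\sim 2/(b_{NNN}n)$, matching $n^{-2^{0}}$. For the induction step, suppose $q_n^{(j)}\sim c_j n^{-\alpha_j}$ with $\alpha_j=2^{-(N-j)}$ for all $j>i$. Since $\alpha_{i+1}=2\alpha_i$, the ansatz $q_n^{(i)}\sim c_i n^{-\alpha_i}$ makes both the linear forcing $m_{i,i+1}q_n^{(i+1)}$ and the quadratic loss $\tfrac12 b_{iii}(q_n^{(i)})^2$ of common order $n^{-\alpha_{i+1}}$, while $q_{n+1}^{(i)}-q_n^{(i)}=O(n^{-\alpha_i-1})$ is asymptotically negligible because $\alpha_i<1$. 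Balancing these two dominant terms gives $c_i=\sqrt{2m_{i,i+1}c_{i+1}/b_{iii}}>0$. The hard part, which I would handle by bracketing $q_n^{(i)}$ between upper and lower solutions of the associated continuous dynamics $y'(t)=m_{i,i+1}c_{i+1}t^{-\alpha_{i+1}}-\tfrac12 b_{iii}y(t)^2$, is to turn this formal balance into a rigorous $\sim$-asymptotic; bootstrapping from $q_n^{(i+1)}\sim c_{i+1}n^{-\alpha_{i+1}}$ to matching upper and lower bounds on $q_n^{(i)}$, and controlling the contributions of the cubic remainder and of the cross terms $m_{ij}q_n^{(j)}$ for $j>i+1$ (which are of lower order since $\alpha_j>\alpha_{i+1}$), is the genuinely technical step.
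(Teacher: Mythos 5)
You should first be aware that the paper does not prove this theorem at all: it is imported verbatim as ``a simplified combination of the respective results from \cite{FN} and \cite{FN2}'', with the constants referred to Theorem~1 of \cite{FN2}. So there is no in-paper argument to match; you are reconstructing Foster--Ney from scratch. Within that reconstruction, your treatment of (\ref{MomentSingle3}) is correct and essentially complete: $\mathbf{M}=\mathbf{I}+\mathbf{A}$ with $\mathbf{A}$ nilpotent, the $(i,l)$-entry of $\mathbf{A}^{k}$ vanishes for $k>l-i$, and (\ref{Maseq}) guarantees the unique maximal chain $i\to i+1\to\cdots\to l$ contributes a positive coefficient, giving $c_{il}=\prod_{j=i}^{l-1}m_{j,j+1}/(l-i)!$. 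Your recursion for $b_{ikl}(n)$ is the standard one, and the identification of $j=j'=i$ as the unique dominant source term (since $p+q-j=i$ with $p,q\ge j\ge i$ forces $j=p=q=i$) is sound; this part only needs the routine induction you allude to.

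The genuine gap is in (\ref{SurvivSingle}), and it sits exactly where you say it does --- but flagging the hard step is not the same as closing it. Your formal balance $m_{i,i+1}c_{i+1}n^{-2\alpha_i}\approx\tfrac12 b_{iii}(q_n^{(i)})^2$ presupposes that $q_n^{(i)}$ is regularly varying of index $-\alpha_i$; a priori you only know it is monotone. To run a sub/supersolution comparison against $y'=m_{i,i+1}c_{i+1}t^{-2\alpha_i}-\tfrac12 b_{iii}y^2$ you must first establish two-sided bounds $C_-n^{-\alpha_i}\le q_n^{(i)}\le C_+n^{-\alpha_i}$ (the lower bound is the nontrivial one --- it amounts to a Paley--Zygmund or second-moment argument using (\ref{MomentSingle3})--(\ref{MomentSingle2}), and note that the paper's Lemma~\ref{L_multit} cannot be invoked here since its proof already uses (\ref{SurvivSingle})), and only then show that any solution trapped at the right order is attracted to the balancing constant $c_i=\sqrt{2m_{i,i+1}c_{i+1}/b_{iii}}$, with uniformity preserved down the induction. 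Two smaller points: the base case constant should be $q_n^{(N)}\sim 2/(b_{NNN}\,n)$ with $b_{NNN}=\mathrm{Var}\,\eta_{NN}$ (consistent with your formula, worth stating); and your remainder $R_n^{(i)}$ cannot be claimed to be ``of cubic order'' under second moments only --- what is true, and sufficient, is that the quadratic term in the expansion of $1-h^{(i,N)}$ carries a $(1+o(1))$ factor as $\mathbf{s}\to\mathbf{1}$, since $h^{(i,N)}$ is $C^2$ up to $\mathbf{1}$. As it stands, your argument establishes $q_n^{(i)}\asymp n^{-2^{-(N-i)}}$ modulo the a priori bounds, but the asserted asymptotic equivalence with an explicit $c_i$ is not yet proved; that is precisely the content of \cite{FN2}.
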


Let $H(s_{1},...,s_{p})=H(\mathbf{s})$ be a multivariate
probability generating function with
\begin{equation*}
m_{l}:=\frac{\partial H(\mathbf{s})}{\partial s_{l}}\left\vert _{\mathbf{s}=%
\mathbf{1}}\right. ,\quad b_{kl}:=\frac{\partial
^{2}H(\mathbf{s})}{\partial s_{k}\partial s_{l}}\left\vert
_{\mathbf{s}=\mathbf{1}}\right.<\infty .
\end{equation*}

\begin{lemma}
\label{L_approx}(see formula (1), page 189, in \cite{AN72}) For any $\mathbf{%
s}= (s_1, ..., s_p)\in[ 0,1] ^{p}$ we have%
\begin{equation*}
\sum_{l=1}^{p}m_{l}\left( 1-s_{l}\right) -\frac{1}{2}%
\sum_{k,l=1}^{p}b_{kl}\left( 1-s_{k}\right) \left( 1-s_{l}\right) \\
\leq 1-H(\mathbf{s})\leq \sum_{l=1}^{p}m_{l}\left( 1-s_{l}\right).
\end{equation*}
\end{lemma}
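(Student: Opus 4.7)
The plan is to reduce to one dimension by restricting $H$ to the line segment joining $\mathbf{1}$ and $\mathbf{s}$, and then apply Taylor's theorem with integral remainder. Concretely, set $\mathbf{u}(t):=\mathbf{1}+t(\mathbf{s}-\mathbf{1})\in[0,1]^{p}$ for $t\in[0,1]$ and $f(t):=H(\mathbf{u}(t))$, so that $f(0)=1$, $f(1)=H(\mathbf{s})$, and
\begin{equation*}
f'(0)=-\sum_{l=1}^{p}m_{l}(1-s_{l}),\qquad f''(t)=\sum_{k,l=1}^{p}H_{kl}(\mathbf{u}(t))(1-s_{k})(1-s_{l}),
\end{equation*}
where $H_{kl}:=\partial^{2}H/\partial s_{k}\partial s_{l}$. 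The Taylor expansion with integral remainder then gives the exact identity
\begin{equation*}
1-H(\mathbf{s})=\sum_{l=1}^{p}m_{l}(1-s_{l})-\int_{0}^{1}(1-t)f''(t)\,dt,
\end{equation*}
so the lemma reduces to controlling this remainder from above and below.

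For the upper bound I would note that $H$, being a probability generating function, has a power series with nonnegative coefficients, so each $H_{kl}$ is a nonnegative function on $[0,1]^{p}$. Combined with $(1-s_{k})(1-s_{l})\geq 0$, this forces $f''(t)\geq 0$, hence the integral term is nonnegative and $1-H(\mathbf{s})\leq\sum_{l}m_{l}(1-s_{l})$. (Alternatively, this is just the statement that a convex function lies above its tangent plane at $\mathbf{1}$.)

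For the lower bound, the essential observation is that each $H_{kl}$ is itself a power series in $\mathbf{s}$ with nonnegative coefficients, hence coordinate-wise monotone increasing on $[0,\mathbf{1}]$. Since $\mathbf{u}(t)\leq\mathbf{1}$ componentwise, this yields $H_{kl}(\mathbf{u}(t))\leq H_{kl}(\mathbf{1})=b_{kl}$, and combining with the nonnegativity of $(1-s_{k})(1-s_{l})$ gives
\begin{equation*}
f''(t)\leq\sum_{k,l=1}^{p}b_{kl}(1-s_{k})(1-s_{l}).
\end{equation*}
Substituting into the Taylor identity and using $\int_{0}^{1}(1-t)\,dt=1/2$ yields the claimed lower bound.

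There is no real obstacle here; the only thing to be careful about is sign bookkeeping (the factors $s_{l}-1$ in $f'$ and $f''$ versus the $1-s_{l}$ appearing in the statement), and to notice that the bound $H_{kl}(\mathbf{u})\leq b_{kl}$ on the segment follows purely from the PGF structure (nonnegative coefficients), with no continuity or regularity hypothesis beyond the finiteness of $b_{kl}$ assumed in the lemma, which is exactly what makes the integral remainder well-defined.
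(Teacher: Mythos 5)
Your proof is correct. Note that the paper does not prove this lemma at all: it is quoted as a known inequality with a reference to Athreya and Ney, so there is no in-paper argument to compare against. Your Taylor-expansion-with-integral-remainder argument along the segment from $\mathbf{1}$ to $\mathbf{s}$ is the standard derivation, and both key points are handled properly: the upper bound from $f''\geq 0$ (nonnegativity of the power-series coefficients of $H_{kl}$), and the lower bound from the coordinate-wise monotonicity $H_{kl}(\mathbf{u}(t))\leq H_{kl}(\mathbf{1})=b_{kl}$ together with $\int_{0}^{1}(1-t)\,dt=1/2$. The only point worth making explicit is that the derivatives at $t=0$ (i.e.\ at $\mathbf{s}=\mathbf{1}$) are one-sided limits, and that $f'$ is absolutely continuous on $[0,1]$ because $f''$ is bounded there by $\sum_{k,l}b_{kl}(1-s_k)(1-s_l)<\infty$; this is exactly the role of the hypothesis $b_{kl}<\infty$, which you correctly identify at the end.
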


From now on we agree to denote by $C,C_{0},C_{1},...$ positive
constants which may be different in different formulas.

For $s= (s_1,...,s_N)$ put
\begin{equation}
M_{i}(n;\mathbf{s}):=\sum_{l=i}^{N}m_{il}(n)\left( 1-s_{l}\right),\, B_{i}(n;%
\mathbf{s}):=\frac{1}{2}\sum_{k,l=i}^{N}b_{ikl}(n)\left(
1-s_{k}\right) \left( 1-s_{l}\right) .  \label{DefAB}
\end{equation}

\begin{lemma}
\label{L_multit} Let the conditions of Theorem 1 be valid. Then for any
tuple $t_{1},...,t_{N}$ of positive numbers and
\begin{equation*}
1-s_{l}=n^{-t_{l}},\ l=1,2,...,N
\end{equation*}%
there exists  $C_{+}<\infty$ such that, for all $n=1,2,...$
\begin{equation*}
Q_{n}^{(i,N)}(\mathbf{s})\leq C_{+}\min \left\{
n^{-2^{-(N-i)}},n^{-\min_{i\leq l\leq N}\left( t_{l}-l+i\right) }\right\} .
\end{equation*}%
If, in addition,
\begin{equation}
\min_{i\leq l\leq N}\left( t_{l}-l+i\right)\geq 1  \label{RanT}
\end{equation}%
then there exists a positive constant $C_{-}$ such that, for all $n=1,2,...$
\begin{equation}
C_{-}n^{-\min_{i\leq l\leq N}\left( t_{l}-l+i\right) }\leq Q_{n}^{(i,N)}(%
\mathbf{s})\leq C_{+}n^{-\min_{i\leq l\leq N}\left( t_{l}-l+i\right) }.
\label{twoSide}
\end{equation}
\end{lemma}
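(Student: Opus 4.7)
The plan is to establish the upper bound by combining two elementary estimates, and to obtain the lower bound from Lemma~\ref{L_approx} together with the moment asymptotics of Theorem~\ref{T_Foster}, treating the boundary case $\tau=1$ separately.

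For the upper bound, monotonicity of the pgf in $\mathbf{s}$ combined with (\ref{SurvivSingle}) gives $Q_n^{(i,N)}(\mathbf{s})\le Q_n^{(i,N)}(\mathbf{0})\le Cn^{-2^{-(N-i)}}$. Separately, the upper half of Lemma~\ref{L_approx} applied to $H_n^{(i,N)}$ yields $Q_n^{(i,N)}(\mathbf{s})\le M_i(n;\mathbf{s})$; inserting $1-s_l=n^{-t_l}$ and $m_{il}(n)\sim c_{il}n^{l-i}$ from (\ref{MomentSingle3}) shows this is $\asymp n^{-\min_{i\le l\le N}(t_l-l+i)}$. Taking the smaller of the two estimates proves the upper bound.

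For the lower bound, write $\tau:=\min_{i\le l\le N}(t_l-l+i)$ and invoke the lower half of Lemma~\ref{L_approx}:
\[
Q_n^{(i,N)}(\mathbf{s}) \geq M_i(n;\mathbf{s}) - B_i(n;\mathbf{s}).
\]
The computation above gives $M_i(n;\mathbf{s})\ge C_0 n^{-\tau}$ for large $n$. By (\ref{MomentSingle2}) every summand of $B_i(n;\mathbf{s})$ is of order $n^{k+l-2i+1-t_k-t_l}=n^{1-(t_k-k+i)-(t_l-l+i)}\le n^{1-2\tau}$, so $B_i(n;\mathbf{s})\le C_1 n^{1-2\tau}$. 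When $\tau>1$ we have $B_i=o(M_i)$ and therefore $M_i-B_i\ge (C_0/2)n^{-\tau}$ for all large $n$; the finitely many small values of $n$ are absorbed into the constant since $Q_n^{(i,N)}(\mathbf{s})>0$ whenever at least one $s_l<1$.

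The main obstacle is the boundary case $\tau=1$: there $M_i$ and $B_i$ are of the same order $n^{-1}$, and the leading constants need not satisfy $c_{il}>\tfrac12 c_{ill}$, so Lemma~\ref{L_approx} alone is not decisive. I would handle it by selecting an index $l^{\ast}\in\{i,\ldots,N\}$ attaining the minimum and using the monotonicity inequality $\prod_l s_l^{Z_{nl}}\le s_{l^{\ast}}^{Z_{n,l^{\ast}}}$ to reduce to one coordinate:
\[
Q_n^{(i,N)}(\mathbf{s}) \geq 1-\mathbf{E}\bigl[s_{l^{\ast}}^{Z_{n,l^{\ast}}}\,\big|\,\mathbf{Z}_0=\mathbf{e}_i\bigr].
\]
When $l^{\ast}=i$ the marginal $\{Z_{ni}\}$ is a critical Galton--Watson process and Kolmogorov's formula $(1-g_n(s))^{-1}=(1-s)^{-1}+\sigma_i^2 n/2+o(n)$ directly yields the required $C_-n^{-1}$; when $l^{\ast}>i$ one appeals to the finer Foster--Ney limit theory underlying Theorem~\ref{T_Foster} to control the distribution of $Z_{n,l^{\ast}}$ on the non-extinction event. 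This boundary step is the crux and is precisely where information going beyond second moments becomes essential.
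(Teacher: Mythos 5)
Your upper bound and your treatment of the strict case $\tau:=\min_{i\le l\le N}(t_l-l+i)>1$ are correct and essentially coincide with the paper's argument: Lemma \ref{L_approx} plus the moment asymptotics (\ref{MomentSingle3})--(\ref{MomentSingle2}). The gap is exactly where you locate it, the boundary case $\tau=1$, and your proposed repair does not close it. Reducing to one coordinate via $Q_n^{(i,N)}(\mathbf{s})\ge 1-\mathbf{E}\bigl[s_{l^{\ast}}^{Z_{n,l^{\ast}}}\bigr]$ works when $l^{\ast}=i$ (then $t_i=1$ and Kolmogorov's asymptotics apply), but for $l^{\ast}>i$ you are in the regime $1-s_{l^{\ast}}=n^{-(l^{\ast}-i+1)}$, where the first- and second-moment contributions to $1-\mathbf{E}\bigl[s_{l^{\ast}}^{Z_{n,l^{\ast}}}\bigr]$ are again both of order $n^{-1}$: the one-variable version of Lemma \ref{L_approx} (or the inequality $1-s^{Z}\ge (1-s)Zs^{Z}$) reproduces precisely the cancellation you were trying to avoid, and a Paley--Zygmund argument only yields a bound of order $n^{-2}$ because $\mathbf{E}\bigl[Z_{n,l^{\ast}}^{2}\bigr]\asymp n^{2(l^{\ast}-i)+1}$. ``Appealing to the finer Foster--Ney limit theory'' is not a usable step unless you quote a specific asymptotic for $1-\mathbf{E}\bigl[s^{Z_{nl}}\bigr]$ in this regime, which you do not.

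The paper closes the boundary case with a short scaling trick that you could adopt verbatim and that removes the case distinction altogether: set $\mathbf{s}(\varepsilon)=\bigl(1-\varepsilon n^{-t_1},\dots,1-\varepsilon n^{-t_N}\bigr)$ for a small fixed $\varepsilon\in(0,1]$. Since $s_l\le s_l(\varepsilon)$, monotonicity gives $Q_n^{(i,N)}(\mathbf{s})\ge Q_n^{(i,N)}(\mathbf{s}(\varepsilon))\ge M_i(n;\mathbf{s}(\varepsilon))-B_i(n;\mathbf{s}(\varepsilon))$. The key observation is that $M_i$ is linear in $\varepsilon$ while $B_i$ is quadratic: under (\ref{RanT}) one has $M_i(n;\mathbf{s}(\varepsilon))\ge \varepsilon C_1 n^{-\tau}$ and $B_i(n;\mathbf{s}(\varepsilon))\le \varepsilon^{2}N^{2}C_4 n^{-\tau}$, because each summand of $B_i$ carries the factor $n^{-(t_k-k+i-1)-(t_l-l+i)}\le n^{-\tau}$ as soon as $\tau\ge 1$. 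Choosing $\varepsilon<C_1/(N^{2}C_4)$ yields (\ref{twoSide}) with $C_-=\varepsilon C_1-\varepsilon^{2}N^{2}C_4>0$, uniformly over all $\tau\ge 1$ including $\tau=1$.
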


\begin{proof}
Take $\varepsilon \in (0,1]$ and denote $\mathbf{s}(\varepsilon
)=\left( 1-\varepsilon n^{-t_{1}},...,1-\varepsilon n^{-t_{N}}\right) $. By
Lemma \ref{L_approx} and monotonicity of $Q_{n}^{(i,N)}(\mathbf{s}(\varepsilon ))$ in $\varepsilon,$ we have
\begin{equation}
M_{i}(n;\mathbf{s}(\varepsilon ))-B_{i}(n;\mathbf{s}(\varepsilon ))\leq
Q_{n}^{(i,N)}(\mathbf{s}(\varepsilon ))\leq Q_{n}^{(i,N)}(\mathbf{s})\leq
M_{i}(n;\mathbf{s}).  \label{Bou2}
\end{equation}%
In view of (\ref{MomentSingle3}) - (\ref{MomentSingle2}) there exist
positive constants $C_{j},j=1,2,3,4$ such that
\begin{eqnarray}
\varepsilon C_{1}n^{-\min_{i\leq l\leq N}\left( t_{l}-l+i\right) } &\leq
&\varepsilon C_{1}\sum_{l=i}^{N}\frac{n^{l-i}}{n^{t_{l}}}\leq M_{i}(n;%
\mathbf{s}(\varepsilon ))=\varepsilon \sum_{l=i}^{N}m_{il}(n)n^{-t_{l}}
\notag \\
&\leq &M_{i}(n;\mathbf{s})\leq C_{2}\sum_{l=i}^{N}\frac{n^{l-i}}{n^{t_{l}}}%
\leq C_{3}n^{-\min_{i\leq l\leq N}\left( t_{l}-l+i\right) }  \label{Bou3}
\end{eqnarray}%
and%
\begin{equation*}
0\leq B_{i}(n;\mathbf{s}(\varepsilon ))\leq \varepsilon
^{2}C_{4}\sum_{k,l=i}^{N}\frac{n^{k-i+1+l-i}}{n^{t_{k}}n^{t_{l}}}.
\end{equation*}
If now $\min_{i\leq k\leq N}\left( t_{k}-k+i-1\right) \geq 0,$ then for
a fixed $\varepsilon >0$%
\begin{equation}
0\leq B_{i}(n;\mathbf{s}(\varepsilon ))\leq \varepsilon
^{2}C_{4}\sum_{k,l=i}^{N}\frac{1}{n^{t_{l}-\left( l-i\right)
}n^{t_{k}-\left( k-i+1\right) }}\leq \varepsilon^2N^2
C_{4}n^{-\min_{i\leq l\leq N}\left( t_{l}-l+i\right) }.
\label{Bou4}
\end{equation}%
Take $0<\varepsilon<\min\{1,C_1/N^2C_4\}.$ Then the  estimates (\ref{Bou2})--(\ref{Bou4})   give (\ref{twoSide}) with $C_{-}= \varepsilon C_1 - \varepsilon^2 N^2C_4$ and $C_+ = C_3.$
\end{proof}

Write $\mathbf{0}^{(r)}=(0,0,...,0)$ and
$\mathbf{1}^{(r)}=(1,1,...,1)$ for the $r$-dimensional vectors all
whose components are zeros and ones, respectively; set $
\mathbf{s}_{r}=\left( s_{r},s_{r+1},...,s_{N}\right)$
and denote by $I\left\{ \mathcal{A}\right\} $  the indicator of the event $%
\mathcal{A}$.

The next lemma, in which we assume that
$\mathbf{Z}_{0}=\mathbf{e}_{1}$
gives an approximation for the function $Q_{n}^{(1,N)}(\mathbf{0}^{(r)},%
\mathbf{s}_{r+1})$.

\begin{lemma}
\label{L_singleterm}If $\min_{r+1\leq l\leq N}\left(
t_{l}-l+1\right)
>2^{-\left( r-1\right) }$ and
\begin{equation*}
1-s_{l}=n^{-t_{l}\text{ }},\,l=r+1,r+2,...,N,
\end{equation*}%
then, as $n\rightarrow \infty $%
\begin{equation*}
Q_{n}^{(1,N)}(\mathbf{0}^{(r)},\mathbf{s}_{r+1})\sim
\mathbf{P}\left( Z_{nr}>0\right) \sim c_{r}n^{-2^{-\left(
r-1\right) }}.
\end{equation*}
\end{lemma}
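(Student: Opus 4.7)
\emph{Proof plan.} The idea is to sandwich $Q_{n}^{(1,N)}(\mathbf{0}^{(r)},\mathbf{s}_{r+1})$ between $\mathbf{P}(Z_{nr}>0)$ and $\mathbf{P}(\max_{1\leq k\leq r}Z_{nk}>0)+o(n^{-2^{-(r-1)}})$, and then identify all three quantities via Theorem \ref{T_Foster}. Writing the generating function as an expectation,
\begin{equation*}
Q_{n}^{(1,N)}(\mathbf{0}^{(r)},\mathbf{s}_{r+1})=\mathbf{P}\!\left(\max_{1\leq k\leq r}Z_{nk}>0\right)+R_{n},
\end{equation*}
where $R_{n}:=\mathbf{E}[\mathbf{1}\{Z_{n1}=\cdots=Z_{nr}=0\}(1-\prod_{k=r+1}^{N}s_{k}^{Z_{nk}})]\geq 0$. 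Dropping $R_{n}$ and replacing the maximum by $Z_{nr}$ alone gives the trivial lower bound $Q_{n}^{(1,N)}(\mathbf{0}^{(r)},\mathbf{s}_{r+1})\geq \mathbf{P}(Z_{nr}>0)$.

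For the upper bound I would drop the indicator in $R_{n}$ and invoke Lemma \ref{L_approx} together with the moment asymptotics $m_{1l}(n)\sim c_{1l}n^{l-1}$ from (\ref{MomentSingle3}):
\begin{equation*}
R_{n}\leq Q_{n}^{(1,N)}(\mathbf{1}^{(r)},\mathbf{s}_{r+1})\leq \sum_{l=r+1}^{N}m_{1l}(n)(1-s_{l})\leq C\sum_{l=r+1}^{N}n^{l-1-t_{l}}\leq C'\,n^{-\min_{r+1\leq l\leq N}(t_{l}-l+1)}.
\end{equation*}
The hypothesis $\min_{r+1\leq l\leq N}(t_{l}-l+1)>2^{-(r-1)}$ renders this error $o(n^{-2^{-(r-1)}})$, so the generating-function weighting is a negligible perturbation of $\mathbf{P}(\max_{k\leq r}Z_{nk}>0)$.

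To close the sandwich, I need $\mathbf{P}(\max_{k\leq r}Z_{nk}>0)\sim \mathbf{P}(Z_{nr}>0)\sim c_{r}n^{-2^{-(r-1)}}$. The projection of $\mathbf{Z}_{n}$ onto its first $r$ coordinates is autonomous: offspring of types $>r$ emitted by lower types never feed back into types $\leq r$ by the chain ordering $m_{ij}=0$ for $j<i$. This projection is an $r$-type strongly critical decomposable process started from $\mathbf{e}_{1}$, so Theorem \ref{T_Foster} applied to it immediately yields $\mathbf{P}(\max_{k\leq r}Z_{nk}>0)\asymp n^{-2^{-(r-1)}}$. The main obstacle is then upgrading this to the exact asymptotic and simultaneously showing the same leading term for $\mathbf{P}(Z_{nr}>0)$, i.e.\ verifying that conditional on survival of the $r$-type subsystem, the top type $r$ is present with probability tending to one; this fine-structure statement for strongly critical processes is to be extracted from \cite{FN} and \cite{FN2} and supplies the common constant $c_{r}$.
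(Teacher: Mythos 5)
Your proposal is correct and follows essentially the same route as the paper: the identical decomposition of $Q_{n}^{(1,N)}(\mathbf{0}^{(r)},\mathbf{s}_{r+1})$ into $\mathbf{P}\left(\cup_{j=1}^{r}\{Z_{nj}>0\}\right)$ plus a nonnegative remainder, the same bound $R_{n}\leq Q_{n}^{(1,N)}(\mathbf{1}^{(r)},\mathbf{s}_{r+1})=O\left(n^{-\min_{r+1\leq l\leq N}(t_{l}-l+1)}\right)=o\left(n^{-2^{-(r-1)}}\right)$ via Lemma \ref{L_approx} and (\ref{MomentSingle3}), and the same appeal to the Foster--Ney results behind (\ref{SurvivSingle}) to identify the survival probability of the autonomous $r$-type subprocess with $\mathbf{P}(Z_{nr}>0)\sim c_{r}n^{-2^{-(r-1)}}$. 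The final identification you flag as the remaining obstacle is treated no more explicitly in the paper, which simply invokes (\ref{SurvivSingle}) and the citations to \cite{FN} and \cite{FN2} at exactly that point.
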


\begin{proof}
 In view of (\ref{SurvivSingle})  we have for
$\mathbf{s}_{r+1}\in [0,1]^{N-r}:$
\begin{eqnarray*}
\mathbf{P}\left( Z_{nr}>0\right) &\leq &\mathbf{P}\left( \cup
_{j=1}^{r}\left\{ Z_{nj}>0\right\} \right) =Q_{n}^{(1,N)}(\mathbf{0}^{(r)},%
\mathbf{1}^{(N-r)}) \\
&\leq
&Q_{n}^{(1,N)}(\mathbf{0}^{(r)},\mathbf{s}_{r+1})=\mathbf{E}\left[
1-s_{r+1}^{Z_{n,r+1}}...s_{N}^{Z_{nN}}I\left\{ \cap
_{j=1}^{r}\left\{
Z_{nj}=0\right\} \right\} \right] \\
&\leq &\mathbf{P}\left( \cup _{j=1}^{r}\left\{ Z_{nj}>0\right\} \right) +%
\mathbf{E}\left[ 1-s_{r+1}^{Z_{n,r+1}}...s_{N}^{Z_{nN}}\right] \\
&\leq &\sum {}_{j=1}^{r}\mathbf{P}\left( Z_{nj}>0\right)
+\mathbf{E}\left[
1-s_{r+1}^{Z_{n,r+1}}...s_{N}^{Z_{nN}}\right] \\
&=& \left( 1+o(1)\right)\mathbf{P}\left( Z_{nr}>0\right)
+Q_{n}^{(1,N)}\left( \mathbf{1}^{(r)},\mathbf{s}_{r+1}\right) .
\end{eqnarray*}%
Further, by the conditions of the lemma we deduce%
\begin{eqnarray*}
Q_{n}^{(1,N)}\left( \mathbf{1}^{(r)},\mathbf{s}_{r+1}\right) &\leq
&\sum_{l=r+1}^{N}m_{1l}(n)n^{-t_{l}} \\
&\leq &Cn^{-\min_{r+1\leq l\leq N}\left( t_{l}-l+1\right) }=o\Big(
n^{-2^{-\left( r-1\right) }}\Big).
\end{eqnarray*}%
 Hence the statement of the
lemma follows.
\end{proof}

\subsection{The case of two types}

Here we consider the situation of two types and investigate the
behavior of the function $1-H_{n}^{(1,2)}\left( s_{1},s_{2}\right)
$ as $n\rightarrow
\infty $ assuming that%
$
1-s_{i}=n^{-t_{i}},\quad i=1,2.
$

\begin{lemma}
\label{L_twodimen}If the conditions of Theorem 1 are valid for $N=2,$ then%
\begin{equation*}
1-H_{n}^{(1,2)}\left( s_{1},s_{2}\right) \asymp \left\{
\begin{array}{ccc}
n^{-1/2} & \text{if} & t_{1}\in (0,\infty),\,0<t_{2}\leq 1; \\
n^{-t_{2}/2} & \text{if} & t_{1}\in( 0,\infty),\,1<t_{2}<2;
\\
n^{-1} & \text{if} & 0<t_{1}< 1,\,t_{2}\geq 2; \\
n^{-1-\min \left( t_{1}-1,t_{2}-2\right) } & \text{if} & t_{1}\geq
1,\,t_{2}\geq 2.
\end{array}%
\right.
\end{equation*}
\end{lemma}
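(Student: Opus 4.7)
The plan is to set up a one-step recursion for $Q_n(s_1,s_2) := 1-H_n^{(1,2)}(s_1,s_2)$ and analyze it by balance arguments. Conditioning on the offspring of the initial type-$1$ particle gives
\begin{equation*}
Q_n(s_1,s_2) = 1 - h^{(1,2)}\bigl(1-Q_{n-1}(s_1,s_2),\,1-q_{n-1}(s_2)\bigr),
\end{equation*}
where $q_n(s_2) := 1-H_n^{(2,2)}(s_2)$ refers to the one-type critical Galton--Watson process of type $2$. Taylor-expanding $h^{(1,2)}$ around $(1,1)$ (using $m_{11}=1$ and writing $b_{11},b_{12},b_{22}$ for the second partials of $h^{(1,2)}$ at $\mathbf{1}$) yields
\begin{equation*}
Q_n = Q_{n-1} + m_{12}q_{n-1} - \tfrac{1}{2}\bigl(b_{11}Q_{n-1}^{2} + 2b_{12}Q_{n-1}q_{n-1} + b_{22}q_{n-1}^{2}\bigr) + O\bigl((Q_{n-1}+q_{n-1})^{3}\bigr).
\end{equation*}
The classical Kolmogorov asymptotics for the single-type critical process give $q_k(s_2)\asymp (n^{t_2}+k)^{-1}$ uniformly in $0\leq k\leq n$.

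Case 4 ($t_1\geq 1,\ t_2\geq 2$) is immediate from Lemma \ref{L_multit}, since $\min(t_1,t_2-1)\geq 1$ and so the exponent $1+\min(t_1-1,t_2-2)$ equals $\min(t_1,t_2-1)$. Case 3 ($0<t_1<1,\ t_2\geq 2$) is where the first genuine work is needed: here $q_{n-1}=O(n^{-t_2})=O(n^{-2})$, so every $q_{n-1}$-dependent contribution in the recursion is negligible against $Q_{n-1}^{2}\asymp n^{-2}$, and the recursion collapses to the Riccati-type relation $Q_n\sim Q_{n-1}-(b_{11}/2)Q_{n-1}^{2}$. Reciprocating and telescoping gives $1/Q_n = 1/Q_0 + (b_{11}/2)n + o(n)$, and since $1/Q_0 = n^{t_1} = o(n)$ for $t_1<1$, this yields $Q_n\asymp n^{-1}$.

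Cases 1 and 2 ($t_2<2$) are where the forcing term $m_{12}q_{n-1}$ is comparable to the quadratic decay $(b_{11}/2)Q_{n-1}^{2}$ and the two balance in the long run. In Case 2 ($1<t_2<2$), $q_k\asymp n^{-t_2}$ uniformly in $k\leq n$, so the balance $Q^{2}\asymp n^{-t_2}$ yields $Q_n\asymp n^{-t_2/2}$. In Case 1 ($0<t_2\leq 1$), the relevant late-time forcing is $q_k\asymp k^{-1}$ for $k\geq n^{t_2}$, so the balance at $k=n$ is $Q^{2}\asymp n^{-1}$, giving $Q_n\asymp n^{-1/2}$. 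To make these rigorous I would inductively verify that suitable multiples $c_{\pm}n^{-\alpha}$ (with $\alpha$ the expected exponent) are super-, respectively sub-solutions of the recursion, using the uniform upper bound $Q_k\leq C_+n^{-1/2}$ from Lemma \ref{L_multit} to dominate the higher-order error terms, and the $\varepsilon$-scaling trick from the proof of Lemma \ref{L_multit} (based on Lemma \ref{L_approx}) to produce a seed lower bound that can be bootstrapped.

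I expect the main obstacle to be the lower bounds in Cases 1 and 2: a single-step application of Lemma \ref{L_approx} overshoots the correct exponent (optimizing $\varepsilon M-\varepsilon^{2}B$ over $\varepsilon\in(0,1]$ yields an exponent such as $-1$, rather than $-t_2/2$ or $-1/2$), so one must integrate the recursion over time to harvest the non-linear forcing-decay balance. This bootstrap from a loose one-step lower bound to the sharp asymptotic exponent is the delicate part of the argument.
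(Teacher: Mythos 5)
Your route is genuinely different from the paper's. You attack the backward recursion $Q_{n}=1-h^{(1,2)}(1-Q_{n-1},1-q_{n-1})$ directly and extract the exponents from a forcing-versus-quadratic-decay balance; the heuristics (quasi-stationary balance $Q^{2}\asymp q$, relaxation time $o(n)$ precisely when $t_{2}<2$, the Riccati telescoping in Case 3, and reading Case 4 off Lemma \ref{L_multit}) are all correct. The paper instead sidesteps the recursion analysis entirely by a composition trick: it chooses $m=m(s_{2})$ so that $1-s_{2}\approx Q_{m}^{(2,2)}(0)$, i.e.\ $m\sim 2n^{t_{2}}/Var\,\eta_{22}$, notes via (\ref{Neglig}) that perturbing the first argument $s_{1}$ costs only $O(n^{-1})$, and then uses the branching property $H_{n}^{(1,2)}(H_{m}^{(1,2)}(\mathbf{0}),H_{m}^{(2,2)}(0))=H_{n+m}^{(1,2)}(\mathbf{0})$ to conclude $1-H_{n}^{(1,2)}(s_{1},s_{2})=Q_{n+m}^{(1,2)}(\mathbf{0})+O(n^{-1})\sim c_{1}(n+m)^{-1/2}+O(n^{-1})$. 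This converts the whole problem in Cases 1 and 2 into the already-quoted Foster--Ney asymptotics (\ref{SurvivSingle}), with the two-sided bound coming from plain monotonicity in $s_{2}$ between $H_{m}^{(2,2)}(0)$ and $H_{m-1}^{(2,2)}(0)$; Case 3 is handled the same way for the single-type marginal plus the first-moment bound $n^{-t_{2}}m_{12}(n)=O(n^{1-t_{2}})$.

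The weak point of your proposal is exactly the part you defer: the lower bounds in Cases 1 and 2. Two concrete issues. First, the sub-solution $v_{k}=c_{-}(n^{t_{2}}+k)^{-1/2}$ cannot be seeded at $k=0$, because $Q_{0}=1-s_{1}=n^{-t_{1}}$ may be far smaller than $n^{-t_{2}/2}$ when $t_{1}$ is large; you must first let the forcing build $Q_{k}$ up to order $kn^{-t_{2}}$ and only start the comparison at a time $k_{0}\asymp n^{t_{2}/2}$ (or $\asymp n^{1/2}$ in Case 1), with constants chosen so that $Q_{k_{0}}\geq v_{k_{0}}$ --- the single-step $\varepsilon$-trick of Lemma \ref{L_multit} does not by itself produce a seed at the right height, as you partly acknowledge. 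Second, your Taylor remainder $O((Q_{n-1}+q_{n-1})^{3})$ is not available under Hypothesis A, which assumes only finite second moments; you must work with the one-sided bounds of Lemma \ref{L_approx} (upper bound linear, lower bound linear minus quadratic), which is enough but changes the bookkeeping. Neither issue is fatal --- the program can be completed --- but as written the proof is a plan rather than a proof at its most delicate step, whereas the paper's substitution $1-s_{2}\mapsto Q_{m}^{(2,2)}(0)$ makes that step unnecessary.
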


\begin{proof}
Observe that for any $0\leq s_{1}\leq s^{\prime}_{1}\leq 1$%
\begin{eqnarray}
H_{n}^{(1,2)}\left( s^{\prime}_{1},s_{2}\right)
-H_{n}^{(1,2)}\left( s_{1},s_{2}\right) &=&\mathbf{E}\left[ \left(
(s^{\prime}_{1})^{Z_{n1}}-s_{1}^{Z_{n1}}\right)
s_{2}^{Z_{n2}}\right]
\notag \\
&\leq &\mathbf{E}\left[ 1-s_{1}^{Z_{n1}}\right]
=1-H_{n}^{(1,1)}\left(
s_{1}\right)  \notag \\
&\leq &\mathbf{P}\left(
Z_{n1}>0|\mathbf{Z}_{0}=\mathbf{e}_{1}\right) \leq Cn^{-1}.
\label{Neglig}
\end{eqnarray}%
Let now $m=m(s_{2})$ be specified by the inequalities%
\begin{equation}
Q_{m}^{(2,2)}\left( 0\right) \leq 1-s_{2}=n^{-t_{2}}\leq
Q_{m-1}^{(2,2)}\left( 0\right) .  \label{probsurv}
\end{equation}%
In view of
\begin{equation*}
Q_{m}^{(2,2)}\left( 0\right) =1-H_{m}^{(2,2)}\left( 0\right) =\mathbf{P}%
\left( Z_{m2}>0|\mathbf{Z}_{0}=\mathbf{e}_{2}\right) \sim
\frac{2}{mVar\eta_{22}},
\end{equation*}%
it follows that  $ m\sim 2n^{t_{2}}/Var\eta_{22}. $
Using this fact, estimate (\ref{Neglig}) and the branching property%
\begin{equation*}
H_{n}^{(1,2)}\left( H_{m}^{(1,2)}\left( \mathbf{s}\right)
,H_{m}^{(2,2)}\left( s_{2}\right) \right) =H_{n+m}^{(1,2)}\left( \mathbf{s}%
\right),
\end{equation*}%
we conclude by (\ref{SurvivSingle}) that%
\begin{eqnarray*}
1-H_{n}^{(1,2)}\left( s_{1},s_{2}\right)
&\geq&1-H_{n}^{(1,2)}\left(
s_{1},H_{m}^{(2,2)} (0) \right) \\
&=&1-H_{n}^{(1,2)}\left( H_{m}^{(1,2)}(\mathbf{0})
,H_{m}^{(2,2)}( 0)\right)+O(n^{-1})\\
&=&Q_{n+m}^{(1,2)}(\mathbf{0})+O(n^{-1})= (1+o(1))\,c_{1}\left(
n+m\right) ^{-1/2}+O(n^{-1}).
\end{eqnarray*}%
Clearly, the result remains valid when $\geq$ is replaced by
$\leq$ with $m$ replaced by $m-1.$ Therefore, $
1-H_{n}^{(1,2)}\left( s_{1},s_{2}\right) \asymp n^{-1/2} $ if
$\,t_{2}\in (0,1],$ and  $ 1-H_{n}^{(1,2)}\left(
s_{1},s_{2}\right) \asymp n^{-t_{2}/2}$ if $t_{2}\in (1,2).$
 This proves the first two relationships of
the lemma.

Consider now the case $t_{2}\geq 2$. In view of (\ref{MomentSingle3})%
\begin{eqnarray*}
1-H_{n}^{(1,1)}\left( s_{1}\right) &=&1-H_{n}^{(1,2)}\left(
s_{1},1\right)
\leq 1-H_{n}^{(1,2)}\left( s_{1},s_{2}\right) \\
&\leq &1-H_{n}^{(1,1)}\left( s_{1}\right)
+n^{-t_{2}}\mathbf{E}\left[
Z_{n2}|\mathbf{Z}_{0}=\mathbf{e}_{1}\right] \\
&=&1-H_{n}^{(1,1)}\left( s_{1}\right)
+(1+o(1))\,c_{12}n^{1-t_{2}}.
\end{eqnarray*}%
Recalling that $1-s_{1}=n^{-t_{1}}$ and selecting $m=m\left(
s_{1}\right) $
similar to (\ref{probsurv}) we get%
\begin{equation}
1-H_{n}^{(1,1)}\left( s_{1}\right) \sim 1-H_{n+m}^{(1,1)}\left(
0\right) \asymp \frac{1}{n^{t_{1}}+n}.  \label{oneSide}
\end{equation}%
Hence, if $t_{1}< 1$ then  $ 1-H_{n}^{(1,2)}\left(
s_{1},s_{2}\right) \asymp n^{-1} $ as claimed.

The  statement for $t_{1}\geq1,t_{2}\geq 2$ follows from (\ref{twoSide}).
\end{proof}

\section{ \ Decomposable branching processes in random environment\label%
{SecREnvironm}}

The model of branching processes in random environment which we
are dealing with is a combination of the processes introduced by
Smith and Wilkinson \cite{SW69} and the ordinary decomposable
multitype Galton-Watson processes. To give a formal description of
the model denote by $\mathcal{M}$ the space
of probability measures on $\mathbb{N}_{0}^{N+1},$ where $\mathbb{N}%
_{0}:=\{0,1,2,...\}$ and let $\mathfrak{e}$ be a random variable
with values
in $\mathcal{M}$. An infinite sequence $\mathcal{E}=(\mathfrak{e}_{1},%
\mathfrak{e}_{2},\ldots )$ of i.i.d. copies of $\mathfrak{e}$ is
said to form a \emph{random environment}.

We associate with $\mathfrak{e}$ and $\mathfrak{e}_{n},n=1,2,...$
random vectors $\left( \xi _{0},...,\xi _{N}\right) $ and $\left(
\xi
_{0}^{(n)},...,\xi _{N}^{(n)}\right) $ such that for $\mathbf{k}\in \mathbb{N%
}_{0}^{N+1}$
\begin{equation*}
\mathbb{P}\left( \left( \xi _{0},...,\xi _{N}\right) =\mathbf{k}|\mathfrak{e}%
\right) =\mathfrak{e}\left( \left\{ \mathbf{k}\right\} \right) ,\ \mathbb{P}%
\left( \left( \xi _{0}^{(n)},...,\xi _{N}^{(n)}\right) =\mathbf{k}|\mathfrak{%
e}_{n}\right) =\mathfrak{e}_{n}\left( \left\{ \mathbf{k}\right\}
\right) .
\end{equation*}

We now specify a branching process $\left(
X_{n},\mathbf{Z}_{n}\right) =\left( X_{n},Z_{n1},...,Z_{nN}\right)
$ in random environment $\mathcal{E}$ with types $0,1,...,N$ as
follows.

1) $\left( X_{0},\mathbf{Z}_{0}\right) =\left(
1,\mathbf{0}\right).$

2) Given $\mathcal{E}\mathbf{=}\left( e_{1},e_{2},...\right) $ and
$\left(
X_{n-1},\mathbf{Z}_{n-1}\right) ,n\geq 1$%
\begin{equation*}
X_{n}=\sum_{k=1}^{X_{n-1}}\xi
_{k0}^{(n-1)},\,Z_{nj}=\sum_{k=1}^{X_{n-1}}\xi
_{kj}^{(n-1)}+\sum_{i=1}^{j}\sum_{k=1}^{Z_{\left( n-1\right)
i}}\eta _{k,ij}^{(n-1)},\quad j=1,...,N
\end{equation*}%
where the tuples $\left( \xi _{k0}^{(n-1)},\xi
_{k1}^{(n-1)},\ldots ,\xi _{kN}^{(n-1)}\right)
,\,k=1,2,...,X_{n-1}$ are i.i.d. random vectors with distribution
$e_{n-1}$ i.e., given $\mathfrak{e}_{n-1}=e_{n-1}$ distributed as
$\left( \xi _{0}^{(n-1)},\xi _{1}^{(n-1)},\ldots ,\xi
_{N}^{(n-1)}\right) ,$ and the tuples $\left( \eta
_{kii}^{(n-1)},\eta _{ki,i+1}^{(n-1)},\ldots
,\eta _{kiN}^{(n-1)}\right) $ are independent random vectors distributed as $%
\left( \eta _{ii},\eta _{i,i+1},\ldots ,\eta _{iN}\right) $ for
$i=1,2,...N,$
i.e., in accordance with the respective probability generating function $%
h^{(i,N)}(\mathbf{s})$ in (\ref{DefNONimmigr}).

Informally, $\xi _{kj}^{(n-1)}$ is the number of type $j$ children
produced by the $k$-th particle of type $0$ of generation $n-1$,
while $\eta _{k,ij}^{(n-1)}$ is the number of type $j$ children
produced by the $k$-th particle of type $i$ of generation $n-1$.

We denote by $\mathbb{P}$ and $\mathbb{E}$ the corresponding
probability measure and expectation on the underlying probability
space to distinguish them from the probability measure and
expectation in constant environment specified by the symbols
$\mathbf{P}$ and $\mathbf{E}$.

Thus, in our model particles of type $0$ belonging to the
$(n-1)$-th generation give birth in total to $X_{n}$ particles of
their own type and to the tuple $\mathbf{Y}_{n}=\left(
Y_{n1},...,Y_{nN}\right) $ of daughter
particles of types $1,2,...,N,$ where%
\begin{equation}
Y_{nj}=\sum_{k=1}^{X_{n-1}}\xi _{kj}^{(n-1)}.  \label{DDefY}
\end{equation}%
In particular,
$
\mathbf{Y}_{1}=\left( Y_{11},...,Y_{1N}\right) =\left( \xi
_{1}^{(0)},...,\xi _{N}^{(0)}\right) =\mathbf{Z}_{1}.
$

Finally, each particle of type $i=1,2,...,N$ generates its own
(decomposable, if $i<N$) process with $N-i+1$ types evolving in a
constant environment.

Let $
\mu _{1}=\mathbb{E}\left[ \xi _{0}|\mathfrak{e}\right] ,\quad \mu _{2}=%
\mathbb{E}\left[ \xi _{0}\left( \xi _{0}-1\right)
|\mathfrak{e}\right] ,
$
and%
\begin{equation*}
\theta _{i}=\mathbb{E}\left[ \xi _{i}|\mathfrak{e}\right] ,\quad
i=1,2,...,N,\quad \Theta _{1}:=\sum_{l=1}^{N}\theta _{l}.
\end{equation*}

Our assumptions on the characteristics of the process we consider
are formulated as

\textbf{Hypothesis A:}
\begin{itemize}
\item The initial state of the process is $\left( X_{0},\mathbf{Z}%
_{0}\right) =\left( 1,\mathbf{0}\right) ;$

\item particles of type $0$ form (on their own) a critical \textbf{\ }%
branching process in a random environment, such that
\begin{equation}
\mathbb{E}\log \mu _{1}=0,\,\mathbb{E}\log ^{2}\mu _{1}\in
(0,\infty ); \label{RandCrit}
\end{equation}

\item particles of type $0$ produce particles of type $1$ with a
positive probability and
\begin{equation*}
\mathbb{P}\left( \theta _{1}>0\right) =1;
\end{equation*}
\item particles of each type form (on their own) critical
branching processes which are independent of the environment, i.e.
$ m_{ii}=\mathbf{E}\eta _{ii}=1,\,i=1,2,...,N;$ \item particles of
type $i=1,2,...,N-1\,$ produce particles of type $i+1$ with a
positive probability, i.e.,\, $ m_{i,i+1}=\mathbf{E}\eta
_{i,i+1}>0,\,i=1,2,...,N-1; $

\item The second moments of the offspring numbers are finite%
\begin{equation*}
\mathbf{E}\eta _{ij}^{2}<\infty ,1\leq i\leq j\leq N\quad
\mbox{with}\quad b_{i}=\frac{1}{2}Var\,\eta _{ii}\in \left(
0,\infty \right) .
\end{equation*}
\end{itemize}

The following theorem is the main result of the paper:

\begin{theorem}
\label{T_main}If Hypothesis A is valid and
\begin{equation}
\mathbb{E}\left[ \mu _{1}^{-1}\right] <\infty ,\ \mathbb{E}\left[
\mu _{2}\mu _{1}^{-2}\left( 1+\max \left( 0,\log \mu _{1}\right)
\right) \right] <\infty ,  \label{Aff}
\end{equation}%
\textbf{\ }then there exists a positive constant $K_{0}$ such that
\begin{equation}
\mathbb{P}\left( \mathbf{Z}_{n}\neq 0|X_{0}=1,\mathbf{Z}_{0}=\mathbf{0}%
\right) \sim \frac{2^{N-1}K_{0}}{\log n}  \label{As_survivN}
\end{equation}%
and for any positive $t_{1},t_{2},...,t_{N}$
\begin{eqnarray}
&&\lim_{n\rightarrow \infty }\mathbb{P}\left( \frac{\log
Z_{ni}}{\log n}\leq t_{i},i=1,...,N\,|\,Z_{n1}>0\right) =G\left(
t_{1},...,t_{N}\right)  \notag
\\
&&\qquad\qquad\qquad\qquad\qquad =1-\frac{1}{1+\max (0,\min_{1\leq
l\leq N}\left( t_{l}-l\right) )}.  \label{LimSurvivN}
\end{eqnarray}
\end{theorem}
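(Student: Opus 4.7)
The plan is to handle both assertions through a unified study of the joint generating function
\begin{equation*}
f_n(\mathbf{s}) := 1 - \mathbb{E}[\mathbf{s}^{\mathbf{Z}_n}], \qquad 1 - s_l = n^{-t_l},\quad l = 1, \ldots, N.
\end{equation*}
The survival asymptotic (\ref{As_survivN}) is the special case $\mathbf{s} = \mathbf{0}$, while the Yaglom limit (\ref{LimSurvivN}) arises from the ratio $f_n(\mathbf{s})/\mathbb{P}(Z_{n,1} > 0)$, the denominator being identified via Lemma \ref{L_singleterm} (with $r = 1$) with $f_n(0, s_2, \ldots, s_N)$ up to lower order terms.

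The first step is to condition on the sigma-algebra $\mathcal{F}_0$ carrying the environment $\mathcal{E}$ together with the entire type-$0$ history $(X_k)_{k \leq n}$ and the emission vectors $(Y_{k1}, \ldots, Y_{kN})_{1 \leq k \leq n}$. The branching property then gives
\begin{equation*}
\mathbb{E}[\mathbf{s}^{\mathbf{Z}_n} \mid \mathcal{F}_0] = \prod_{k=1}^{n} \prod_{i=1}^{N} (1 - Q_{n-k}^{(i,N)}(\mathbf{s}))^{Y_{ki}}.
\end{equation*}
For admissible $(t_1, \ldots, t_N)$, Lemma \ref{L_singleterm} with $r = 1$ makes the $i \geq 2$ contributions $Q_{n-k}^{(i,N)}(\mathbf{s})$ negligible relative to $Q_{n-k}^{(1,N)}(\mathbf{s}) \sim c\,(n-k)^{-1}$; taking logarithms and expectations then yields
\begin{equation*}
f_n(\mathbf{s}) \sim \mathbb{E}\Big[1 - \exp\Big(-c \sum_{k=1}^{n} \frac{Y_{k1}}{n - k + 1}\Big)\Big].
\end{equation*}

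The remaining task is a functional-of-BPRE computation involving only the type-$0$ process and its type-$1$ emissions, and it is of the same form treated in \cite{VDJS} for $N = 1$. Under Hypothesis A and the moment condition (\ref{Aff}), the rate $(\log n)^{-1}$ arises from the Afanasyev--Geiger--Kersting--Vatutin asymptotics $\mathbb{P}(X_n > 0) \asymp n^{-1/2}$ combined with fluctuation theory for the random walk $S_n = \sum_{k \leq n} \log \mu_1^{(k)}$ conditioned to stay above its running minimum; the moment condition (\ref{Aff}) enters precisely at this step. Dividing $f_n(\mathbf{s})$ by $\mathbb{P}(Z_{n,1} > 0)$ and passing to the limit identifies the conditional CDF as a function of $\min_l(t_l - l)$, with the explicit form $1/(1 + \max(0, \cdot))$ reflecting a uniform-type limit law for the bottleneck time of the conditioned type-$0$ BPRE that selects the dominant surviving type-$1$ emission.

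The main obstacle is the BPRE analysis in this last step: one must adapt the \cite{VDJS} computation to carry the additional dependence on $(t_1, \ldots, t_N)$ introduced above, and justify the approximation of $\sum_k Y_{k1}/(n-k+1)$ by its BPRE limit pathwise (not merely in expectation), which is where the assumptions in (\ref{Aff}) are essential. A secondary subtle point is to produce the exact prefactor $2^{N-1} K_0$ in (\ref{As_survivN}): the reduction of Step 2 to type $1$ becomes ineffective at the endpoint $\mathbf{s} = \mathbf{0}$, and the factor $2^{N-1}$ emerges only through a careful accounting of the cascade of survival constants $c_i$ from Theorem \ref{T_Foster} across all intermediate types.
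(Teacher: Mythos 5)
Your overall skeleton matches the paper's: factor the generating function over the emigrants $Y_{ki}$ given the type-$0$ history, reduce to a functional of the total emigration, and invoke tail results for the critical BPRE. But the central reduction as you state it is wrong, and it is wrong in a way that erases exactly the structure that produces the limit law. You claim that for admissible $(t_1,\ldots,t_N)$ one has $Q_{n-k}^{(1,N)}(\mathbf{s})\sim c(n-k)^{-1}$, so that $f_n(\mathbf{s})\sim\mathbb{E}\bigl[1-\exp\bigl(-c\sum_k Y_{k1}/(n-k+1)\bigr)\bigr]$. This is false except at the boundary $\min_l(t_l-l)=0$: by Lemma \ref{L_multit} (not Lemma \ref{L_singleterm}, which concerns $Q_n^{(1,N)}(\mathbf{0}^{(r)},\mathbf{s}_{r+1})$ and is used for the $\{Z_{n1}=0\}$ correction term), one has $Q_n^{(1,N)}(\mathbf{s})\asymp n^{-1-\min_{1\leq l\leq N}(t_l-l)}$ under (\ref{RanT}). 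Your approximant is independent of $(t_1,\ldots,t_N)$, so it cannot yield a limit $G$ depending on $\min_l(t_l-l)$. The paper instead restricts to $\{\mathrm{T}\leq\sqrt{n}\}$, on which $Q_{n-k}^{(i,N)}\sim Q_n^{(i,N)}$ uniformly, so $R_N(n,\mathbf{s})$ collapses (up to constants) to $L_{\mathrm{T}1}\,Q_n^{(1,N)}(\mathbf{s})\asymp L_{\mathrm{T}1}\,n^{-1-\min_l(t_l-l)}$; no pathwise approximation of $\sum_k Y_{k1}/(n-k+1)$ is needed.

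The second missing ingredient is the quantitative input that converts this into the stated limits: Afanasyev's result (Lemma \ref{L_AfanTot}), extended in Lemma \ref{L_ytotal} to $\mathbb{P}(L_{\mathrm{T}}>x)\sim\mathbb{P}(L_{\mathrm{T}1}>x)\sim K_0/\log x$, which via a Tauberian theorem gives $\mathbb{E}[1-e^{-\lambda L_{\mathrm{T}}}]\sim K_0/\log(1/\lambda)$. Evaluating at $\lambda\asymp n^{-1-\min_l(t_l-l)}$ yields $K_0/((1+\min_l(t_l-l))\log n)$, which is the entire source of the form $1/(1+\max(0,\cdot))$; no fluctuation theory for the conditioned random walk or ``bottleneck time'' analysis is involved, and (\ref{Aff}) enters only through the hypotheses of \cite{GK00} and \cite{Af99}. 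Likewise your account of the prefactor $2^{N-1}$ is off: the reduction to type $1$ remains effective at $\mathbf{s}=\mathbf{0}$ since $Q_n^{(i,N)}(\mathbf{0})=o(Q_n^{(1,N)}(\mathbf{0}))$ for $i\geq2$, and the factor arises simply because $\log\bigl(1/Q_n^{(1,N)}(\mathbf{0})\bigr)\sim 2^{-(N-1)}\log n$ by (\ref{SurvivSingle}); the constants $c_i$ of the intermediate types are irrelevant under the slowly varying tail. As written, your argument would not produce either the correct dependence of $G$ on $(t_1,\ldots,t_N)$ or the constants in (\ref{As_survivN}).
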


The proof of the theorem is divided into several stages.

Let%
\begin{equation*}
\mathrm{T}=\min \{n\geq 0:X_{n}=0\}.
\end{equation*}%
According to \cite[Theorem 1]{GK00}, if conditions (\ref{RandCrit}) and (\ref%
{Aff}) are valid then for a positive constant $c$
\begin{equation}
\mathbb{P}\left( X_{n}>0\right) =\mathbb{P}\left(
\mathrm{T}>n\right) \sim \frac{c}{\sqrt{n}},\ n\rightarrow \infty
.  \label{koz}
\end{equation}

Set $S_{n}:=\sum_{k=0}^{n-1}X_{k}$ and $A_{n}=\max_{0\leq k\leq
n-1}X_{k}$, so that $S_{\mathrm{T}}$ and $~A_{\mathrm{T}}$ give
the total number ever born of type $0$ particles and the maximal
generation size of type $0$ particles.

\begin{lemma}
\label{L_AfanTot}(see \cite{Af99}) If conditions (\ref{RandCrit}) and (\ref%
{Aff}) are valid then there exists a constant $K_{0}\in \left(
0,\infty \right) $ such that
\begin{equation}
\mathbb{P}\left( S_{\mathrm{T}}>x\right) \sim \mathbb{P}\left( A_{\mathrm{T}%
}>x\right) \sim \frac{K_{0}}{\log x},\ x\rightarrow \infty .
\label{As_Total2}
\end{equation}
\end{lemma}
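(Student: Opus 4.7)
The plan is to reduce the tails of $S_{\mathrm{T}}$ and $A_{\mathrm{T}}$ to the tail of the extinction time $\mathrm{T}$ itself, which by (\ref{koz}) is known to behave like $c/\sqrt{n}$. The underlying heuristic is that on survival the BPRE size $X_n$ is essentially of the exponential order $e^{S_n}$, where $S_n:=\sum_{k=0}^{n-1}\log \mu_1^{(k)}$ is the associated random walk, which by (\ref{RandCrit}) has zero mean and a finite positive variance, call it $\sigma^2$. So the first step I would carry out is a quenched Yaglom-type statement: under Hypothesis A together with (\ref{Aff}), conditional on $\{\mathrm{T}>n\}$ the ratio $X_n/e^{S_n}$ is tight and bounded away from $0$ in probability, with a non-degenerate limiting law $W$. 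This is the content of the Afanasyev--Geiger--Kersting analysis, where the moment condition (\ref{Aff}) is exactly what is needed to make the quenched corrector $\sum_{k<n}\mu_2^{(k)}/(\mu_1^{(k)})^2$ well-behaved so that $W>0$ almost surely on survival.

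Armed with the Yaglom statement, I would treat $A_{\mathrm{T}}$ by writing $\mathbb{P}(A_{\mathrm{T}}>x)=\mathbb{P}(\mathrm{T}>\tau_x)$, where $\tau_x:=\inf\{n:X_n>x\}$, and observing that on the conditioned event $\tau_x$ concentrates around $n_x:=(\log x)^2/(2\sigma^2)$ (the typical time at which a centred walk with variance $\sigma^2$, conditioned to stay non-negative, first exceeds height $\log x$). Substituting $n=n_x$ into (\ref{koz}) and combining with the entrance law of the conditioned walk gives
\[
\mathbb{P}(A_{\mathrm{T}}>x)\sim \frac{c}{\sqrt{n_x}}\,\kappa = \frac{K_0}{\log x},
\]
with $K_0$ a Tauberian constant packaging $c$, $\sigma$, and the mean of $W$. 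For the total progeny $S_{\mathrm{T}}$ the lower bound $\mathbb{P}(S_{\mathrm{T}}>x)\geq \mathbb{P}(A_{\mathrm{T}}>x)$ is immediate from $S_{\mathrm{T}}\geq A_{\mathrm{T}}$. For the matching upper bound I would use $S_{\mathrm{T}}\leq \mathrm{T}\cdot A_{\mathrm{T}}$ together with the fact that on events contributing to $\{S_{\mathrm{T}}>x\}$ the extinction time satisfies $\mathrm{T}=O((\log x)^2)$, so the prefactor is sub-polynomial in $x$ and negligible on the scale $\log x$; hence $\log S_{\mathrm{T}}$ and $\log A_{\mathrm{T}}$ differ by $o(\log x)$, which yields $\mathbb{P}(S_{\mathrm{T}}>x)\sim \mathbb{P}(A_{\mathrm{T}}>x)$ with the same $K_0$.

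The main obstacle will be producing the sharp constant $K_0$ on both sides and showing it is strictly positive. This requires uniform control of the ratio $X_n/e^{S_n}$ after a change of measure that makes $S_n$ a random walk conditioned to stay non-negative, together with a careful Tauberian translation between the first-passage distribution of that conditioned walk at level $\log x$ and the tail of $\mathrm{T}$ from (\ref{koz}). Condition (\ref{Aff}), and in particular the $\log$-moment assumption on $\mu_2\mu_1^{-2}$, is precisely what ensures both that the Yaglom limit $W$ has a finite positive mean and that the sub-leading stochastic fluctuations of $\log X_n$ around $S_n$ do not wash out this mean; this is the delicate quantitative step, for which I would appeal to the argument of Afanasyev \cite{Af99}.
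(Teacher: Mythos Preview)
The paper does not give its own proof of this lemma: it is quoted directly from Afanasyev~\cite{Af99}, and the only argument supplied is a one-sentence remark following the statement, to the effect that Afanasyev proved (\ref{As_Total2}) under the additional assumption that the random offspring generating functions are linear-fractional with probability~1, and that this restriction can be dropped using the later general-case results of Geiger--Kersting~\cite{GK00} and Afanasyev--Geiger--Kersting--Vatutin~\cite{AGKV}. So there is no ``paper's own proof'' to compare against; the lemma functions here purely as a cited input.

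Your sketch is in the right spirit and indeed follows the line of Afanasyev's original argument: exploit the quenched behaviour $X_n\approx e^{S_n^{\mathrm{rw}}}$ (with $S_n^{\mathrm{rw}}$ the associated random walk of log-means, not to be confused with the paper's $S_n=\sum_{k<n}X_k$), translate the tail of $A_{\mathrm{T}}$ into a first-passage statement for a zero-mean, finite-variance walk conditioned to stay nonnegative, and then sandwich $S_{\mathrm{T}}$ between $A_{\mathrm{T}}$ and $\mathrm{T}\cdot A_{\mathrm{T}}$. Two points worth flagging. First, the heuristic ``$\tau_x$ concentrates at $(\log x)^2/(2\sigma^2)$, hence plug into (\ref{koz})'' is not by itself a proof of a sharp $\sim K_0/\log x$ asymptotic with an identified constant; what actually produces the constant is a local/renewal analysis of the conditioned walk at the level of its ladder structure, and this is exactly the non-trivial content you defer to \cite{Af99}. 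Second, your upper bound for $S_{\mathrm{T}}$ asserts that ``on events contributing to $\{S_{\mathrm{T}}>x\}$ one has $\mathrm{T}=O((\log x)^2)$'', which is not quite right as stated (there is no a~priori deterministic bound); the clean way is the $\varepsilon$-splitting the paper uses elsewhere, namely $\mathbb{P}(S_{\mathrm{T}}>x)\le \mathbb{P}(A_{\mathrm{T}}>x^{1-\varepsilon})+\mathbb{P}(\mathrm{T}>x^{\varepsilon})$ and then send $\varepsilon\downarrow 0$, which immediately gives the matching upper bound on the $1/\log x$ scale via (\ref{koz}).
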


In fact, the representation (\ref{As_Total2}) has been proved in
\cite{Af99} under conditions (\ref{Aff}) and (\ref{RandCrit}) only
for the case when the probability generating functions
$f_{n}\left( s,\mathbf{1}^{(N)}\right) $ are linear-fractional
with probability 1. However, this restriction is easily removed
using the results established later on for the general case in
\cite{GK00} and \cite{AGKV}.

Let now
$
\left\Vert \mathbf{Y}_{n}\right\Vert =Y_{n1}+...+Y_{nN},~\zeta
_{k}^{(n)}=\xi _{k1}^{(n-1)}+\ldots +\xi _{kN}^{(n-1)}
$
and%
\begin{eqnarray*}
L_{nj}
&=&\sum_{l=1}^{n}Y_{lj}=\sum_{l=1}^{n}\sum_{k=1}^{X_{l-1}}\xi
_{kj}^{(l-1)},\quad B_{nj}=\max_{1\leq l\leq n}Y_{lj}, \\
L_{n} &=&\sum_{l=1}^{n}\left\Vert \mathbf{Y}_{l}\right\Vert
=\sum_{l=1}^{n}\sum_{k=1}^{X_{l-1}}\zeta _{k}^{(l-1)},\quad
B_{n}=\max_{1\leq l\leq n}\left\Vert \mathbf{Y}_{l}\right\Vert .
\end{eqnarray*}%
In particular, $L_{\mathrm{T}}$ gives the total number of daughter
particles of types $1,...,N$ produced by type $0$ particles during
the evolution of the process.

\begin{lemma}
\label{L_ytotal}If conditions (\ref{RandCrit}) and (\ref{Aff}) are
valid and $\mathbb{P}\left( \Theta _{1}>0\right) =1,$ then
\begin{equation}
\mathbb{P}\left( B_{\mathrm{T}}>x\right) \sim \mathbb{P}\left( L_{\mathrm{T}%
}>x\right) \sim \frac{K_{0}}{\log x},\ x\rightarrow \infty .
\label{AsTotY}
\end{equation}%
If conditions (\ref{Aff}), (\ref{RandCrit}) are valid and
$\mathbb{P}\left( \theta _{j}>0\right) =1$ for some $j\in \left\{
1,...,N\right\} $ then
\begin{equation}
\mathbb{P}\left( B_{\mathrm{T}j}>x\right) \sim \mathbb{P}\left( L_{\mathrm{T}%
j}>x\right) \sim \frac{K_{0}}{\log x},\ x\rightarrow \infty .
\label{AsTotYj}
\end{equation}
\end{lemma}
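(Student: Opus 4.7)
The aim is to deduce the tail asymptotics of $L_{\mathrm{T}}$, $B_{\mathrm{T}}$, $L_{\mathrm{T}j}$, and $B_{\mathrm{T}j}$ from Lemma~\ref{L_AfanTot} by establishing multiplicative comparability with $S_{\mathrm{T}}$ and $A_{\mathrm{T}}$. The decisive structural point is that $1/\log x$ is slowly varying, so that $\mathbb{P}(S_{\mathrm{T}}>cx)\sim K_0/\log x$ for every constant $c>0$; hence any sandwich of the form
\[
\mathbb{P}(S_{\mathrm{T}}>c_2 x)(1+o(1)) \;\leq\; \mathbb{P}(L_{\mathrm{T}}>x) \;\leq\; \mathbb{P}(S_{\mathrm{T}}>c_1 x)(1+o(1))+o(1/\log x)
\]
automatically produces the asymptotic $\mathbb{P}(L_{\mathrm{T}}>x)\sim K_0/\log x$ with the same constant $K_0$. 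The proof therefore reduces to establishing such two-sided probabilistic comparability between $L_{\mathrm{T}}$ and $S_{\mathrm{T}}$ (and between $B_{\mathrm{T}}$ and $A_{\mathrm{T}}$).

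\textbf{Lower bound.} Using $\mathbb{P}(\Theta_{1}>0)=1$, I would fix $\delta>0$ with $p_{\delta}:=\mathbb{P}(\Theta_{1}\geq\delta)$ as close to $1$ as desired. Conditional on the environment $\mathcal{E}$ and the type-$0$ trajectory $(X_{0},X_{1},\ldots)$, the summands $\zeta_{k}^{(l-1)}$ are independent with conditional mean $\Theta_{1}^{(l-1)}$. Because each $\mathfrak{e}_{l-1}$ is independent of $(X_{0},\ldots,X_{l-1})$ and of the event $\{l\leq\mathrm{T}\}$, a Borel--Cantelli type argument would show that on $\{\mathrm{T}\geq n\}$, for large $n$, at least a proportion $p_{\delta}-o(1)$ of generations $l\leq\mathrm{T}$ satisfy $\Theta_{1}^{(l-1)}\geq\delta$; consequently $\mathbb{E}[L_{\mathrm{T}}\mid\mathcal{E},X_{\cdot}]\geq(\delta p_{\delta}/2)S_{\mathrm{T}}$ on a set of high probability. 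A conditional Chebyshev bound, whose variance term $\sum_{l}X_{l-1}\,\mathrm{Var}(\zeta^{(l-1)}\mid\mathfrak{e}_{l-1})$ is of order $S_{\mathrm{T}}$ on typical environments, then upgrades this to $L_{\mathrm{T}}\geq c\,S_{\mathrm{T}}$ with conditional probability $1-o(1)$ on $\{S_{\mathrm{T}}>x\}$, giving the lower half of the sandwich.

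\textbf{Upper bound.} This is the principal obstacle, since Hypothesis~A does not a priori supply any integrability for $\Theta_{1}$. I would fix $M$ with $\mathbb{P}(\Theta_{1}>M)<\epsilon$ and split $L_{\mathrm{T}}=L_{\mathrm{T}}^{\leq M}+L_{\mathrm{T}}^{>M}$ according to whether $\Theta_{1}^{(l-1)}\leq M$ or not. The tame part satisfies $L_{\mathrm{T}}^{\leq M}\leq 2M\,S_{\mathrm{T}}$ with high conditional probability by the same second-moment argument used above. For the wild part I would restrict to $\{\mathrm{T}\leq N_{x}\}$ with $N_{x}$ polynomial in $x$ (the complement contributes $o(1/\log x)$ by (\ref{koz})); on this event, a union bound over the at most ${\sim}\,\epsilon N_{x}$ generations with $\Theta_{1}^{(l-1)}>M$, combined with conditional tail estimates for $\zeta_{k}^{(l-1)}$ derived from (\ref{Aff}) and the finite second moments of $\eta_{ij}$, should give $L_{\mathrm{T}}^{>M}\leq x$ except on a set of probability $o(1/\log x)$. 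Summing the two contributions yields the upper half of the sandwich.

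\textbf{Deduction of the remaining statements.} The inequality $B_{\mathrm{T}}\leq L_{\mathrm{T}}$ furnishes the upper tail for $B_{\mathrm{T}}$. The matching lower bound is obtained by applying the conditional Chebyshev argument only to $\|Y_{l^{\ast}}\|$, where $l^{\ast}$ is the (random) generation at which $X_{l^{\ast}-1}=A_{\mathrm{T}}$, so that $B_{\mathrm{T}}\geq c\,A_{\mathrm{T}}$ with high probability on $\{A_{\mathrm{T}}>x\}$; combined with $\mathbb{P}(A_{\mathrm{T}}>x)\sim K_{0}/\log x$ from Lemma~\ref{L_AfanTot}, this gives the claim. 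The statements (\ref{AsTotYj}) for a fixed $j$ follow by the identical argument with $\zeta$ replaced by $\xi_{j}$ and $\Theta_{1}$ by $\theta_{j}$, the hypothesis $\mathbb{P}(\theta_{j}>0)=1$ playing the role of $\mathbb{P}(\Theta_{1}>0)=1$ throughout. As indicated, I expect the main technical work to lie in the upper bound step, specifically in controlling the contribution of generations with atypically large environmental mean.
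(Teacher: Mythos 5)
Your overall strategy (sandwich the tail of $L_{\mathrm{T}}$, $B_{\mathrm{T}}$ between tails of $S_{\mathrm{T}}$, $A_{\mathrm{T}}$ and invoke slow variation plus Lemma \ref{L_AfanTot}) is the right one, but you insist on \emph{constant-factor} comparability $c_1 S_{\mathrm{T}}\lesssim L_{\mathrm{T}}\lesssim c_2 S_{\mathrm{T}}$, and that is exactly what makes your upper bound break. The step you yourself flag as the principal obstacle is a genuine gap: you propose to control the generations with $\Theta_1^{(l-1)}>M$ using ``conditional tail estimates for $\zeta_k^{(l-1)}$ derived from (\ref{Aff}) and the finite second moments of $\eta_{ij}$'', but neither hypothesis says anything about $\xi_1,\dots,\xi_N$. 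Condition (\ref{Aff}) constrains only $\mu_1,\mu_2$, i.e.\ the type-$0$ offspring $\xi_0$, and the second-moment assumption concerns only the $\eta_{ij}$ of the constant-environment types; Hypothesis A imposes no integrability on $\Theta_1$ or on $\zeta^{(l)}$ beyond $\mathbb{P}(\Theta_1>0)=1$. So the estimate $L_{\mathrm{T}}^{>M}\leq x$ off a set of probability $o(1/\log x)$ cannot be extracted from the stated assumptions by the route you describe, and without it your sandwich has no upper half.

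The paper escapes precisely by exploiting that a $1/\log x$ tail tolerates \emph{polynomial} distortions, not just constant ones: since $\log(x^{1-\varepsilon})=(1-\varepsilon)\log x$, it suffices to prove $\mathbb{P}(B_{\mathrm{T}}>x)\leq\mathbb{P}(A_{\mathrm{T}}>x^{1-\varepsilon})+o(1/\log x)$ and then let $\varepsilon\downarrow 0$. With this slack the upper bound is almost free: on $\{A_{\mathrm{T}}\leq x^{1-\varepsilon}\}$ a single generation has at most $x^{1-\varepsilon}$ type-$0$ parents, so by a plain (unconditional) Markov inequality it produces more than $x$ immigrants with probability at most $x^{-\varepsilon}\mathbb{E}\left[\Vert\mathbf{Y}_1\Vert\right]$; a union bound over the at most $x^{\varepsilon/2}$ relevant generations (the event $\{\mathrm{T}>x^{\varepsilon/2}\}$ costs only $O(x^{-\varepsilon/4})$ by (\ref{koz})) gives $\mathbb{P}(B_{\mathrm{T}}>x;A_{\mathrm{T}}\leq x^{1-\varepsilon})=O(x^{-\varepsilon/4})$, and then $B_{\mathrm{T}}\leq L_{\mathrm{T}}\leq \mathrm{T}B_{\mathrm{T}}$ together with $\mathbb{P}(\mathrm{T}>x^{\varepsilon})=O(x^{-\varepsilon/2})$ transfers the bound to $L_{\mathrm{T}}$. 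No truncation of $\Theta_1$, no Chebyshev variance control, and no tail estimates for $\zeta$ are needed. Two further remarks: your lower bound for $B_{\mathrm{T}}$ conditions on the \emph{argmax} generation $l^{\ast}$, which is not a stopping time, so the environment $\mathfrak{e}_{l^{\ast}}$ is not independent of the event defining $l^{\ast}$; the paper uses the first-passage time $T_x=\min\{k:X_k>x\}$ instead, for which the required independence does hold, and then only needs the conditional law of large numbers to get $\Vert\mathbf{Y}_{T_x+1}\Vert>x^{1-\varepsilon}$ (again a polynomial, not constant, comparison). Finally, your claim that $\mathbb{E}[L_{\mathrm{T}}\mid\mathcal{E},X_{\cdot}]\gtrsim S_{\mathrm{T}}$ plus Chebyshev upgrades to $L_{\mathrm{T}}\geq cS_{\mathrm{T}}$ ``on $\{S_{\mathrm{T}}>x\}$'' would also need care, since conditioning on $\{S_{\mathrm{T}}>x\}$ biases the environment; the paper's stopping-time decomposition avoids having to argue on that conditioned event at all.
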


\begin{proof} For any $\varepsilon \in \left( 0,1\right) $ we have%
\begin{equation*}
\mathbb{P}\left( A_{\mathrm{T}}>x\right) \leq \mathbb{P}\left( B_{\mathrm{T}%
}>x^{1-\varepsilon }\right) +\mathbb{P}\left( A_{\mathrm{T}}>x;B_{\mathrm{T}%
}\leq x^{1-\varepsilon }\right) .
\end{equation*}%
Let $T_{x}=\min \left\{ k:X_{k}>x\right\} $. Then%
\begin{eqnarray*}
\mathbb{P}\left( A_{\mathrm{T}}>x;B_{\mathrm{T}}\leq
x^{1-\varepsilon }\right) &\leq &\sum_{l=1}^{\infty
}\mathbb{P}\left( T_{x}=l;\left\Vert
\mathbf{Y}_{l+1}\right\Vert \leq x^{1-\varepsilon }\right) \\
&=&\sum_{l=1}^{\infty }\mathbb{P}\left(
T_{x}=l;\sum_{k=1}^{X_{l}}\zeta
_{k}^{(l)}\leq x^{1-\varepsilon }\right) \\
&\leq &\mathbb{P}\left( A_{\mathrm{T}}>x\right) \mathbb{P}\left( \sum_{k=1}^{%
\left[ x\right] }\zeta _{k}^{(0)}\leq x^{1-\varepsilon }\right) .
\end{eqnarray*}%
Since $\mathbb{P}\left( \Theta _{1}>0\right) =1$ and $\Theta _{1}=\mathbb{E}%
\left[ \zeta _{k}^{(0)}|\mathfrak{e}\right] ,k=1,2,...,$ the law
of large numbers gives
\begin{equation*}
\lim_{x\rightarrow \infty }\mathbb{P}\left( \frac{1}{x\Theta _{1}}%
\sum_{k=1}^{\left[ x\right] }\zeta _{k}^{(0)}\leq
\frac{1}{x^{\varepsilon }\Theta _{1}}\Big|\mathfrak{e}\right)
=0\text{ \ }\mathbb{P}\text{ - a.s..}
\end{equation*}%
Thus%
\begin{equation*}
\lim \sup_{x\rightarrow \infty }\mathbb{P}\left(
\sum_{k=1}^{\left[ x\right] }\zeta _{k}^{(0)}\leq x^{1-\varepsilon
}\right) \leq \mathbb{E}\left[ \lim \sup_{x\rightarrow \infty
}\mathbb{P}\left( \sum_{k=1}^{\left[ x\right] }\zeta
_{k}^{(0)}\leq x^{1-\varepsilon }\Big|\mathfrak{e}\right) \right]
=0.
\end{equation*}%
As a result, for any $\delta >0$ and all $x\geq x_{0}(\delta )$ we
get
\begin{equation}
\left( 1-\delta \right) \mathbb{P}\left( A_{\mathrm{T}}>x\right)
\leq \mathbb{P}\left( B_{\mathrm{T}}>x^{1-\varepsilon }\right) .
\label{Upj}
\end{equation}%
To deduce for $\mathbb{P}\left( B_{\mathrm{T}}>x\right) $ an
estimate from
above we write%
\begin{equation}
\mathbb{P}\left( B_{\mathrm{T}}>x\right) \leq \mathbb{P}\left( A_{\mathrm{T}%
}>x^{1-\varepsilon }\right) +\mathbb{P}\left( B_{\mathrm{T}}>x;A_{\mathrm{T}%
}\leq x^{1-\varepsilon }\right) .  \label{Down1}
\end{equation}%
Further, letting $\mathrm{\hat{T}}_{x}=\min \left\{ k:\left\Vert \mathbf{Y}%
_{k}\right\Vert >x\right\} $ we have
\begin{eqnarray*}
\mathbb{P}\left( B_{\mathrm{T}}>x;A_{\mathrm{T}}\leq
x^{1-\varepsilon
}\right) &\leq &\mathbb{P}\left( T>x^{\varepsilon /2}\right) \\
&&+\sum_{1\leq l\leq x^{\varepsilon /2}}\mathbb{P}\left( \mathrm{\hat{T}}%
_{x}=l;A_{\mathrm{T}}\leq x^{1-\varepsilon }\right) .
\end{eqnarray*}%
By Markov inequality we see that
\begin{eqnarray*}
&&\sum_{1\leq l\leq x^{\varepsilon /2}}\mathbb{P}\left( \mathrm{\hat{T}}%
_{x}=l;A_{\mathrm{T}}\leq x^{1-\varepsilon }\right) \leq
\sum_{1\leq l\leq x^{\varepsilon /2}}\mathbb{P}\left( X_{l-1}\leq
x^{1-\varepsilon
};\left\Vert \mathbf{Y}_{l}\right\Vert >x\right) \\
&&\qquad \qquad \qquad \qquad \quad \leq x^{\varepsilon
/2}\mathbb{P}\left( \sum_{k=1}^{\left[ x^{1-\varepsilon }\right]
}\zeta _{k}^{(0)}>x\right) \leq
x^{-\varepsilon /2}\mathbb{E}\left[ \left\Vert \mathbf{Y}_{1}\right\Vert %
\right] .
\end{eqnarray*}%
Hence, recalling (\ref{koz}) we obtain  $\mathbb{P}\left(
B_{\mathrm{T}}>x;A_{\mathrm{T}}\leq x^{1-\varepsilon }\right)
=O\left( x^{-\varepsilon /4}\right) $ implying in view of
(\ref{Down1})
\begin{equation}
\mathbb{P}\left( B_{\mathrm{T}}>x\right) \leq \mathbb{P}\left( A_{\mathrm{T}%
}>x^{1-\varepsilon }\right) +O\left( x^{-\varepsilon /4}\right) .
\label{Down2}
\end{equation}%
Combining (\ref{Upj}) and (\ref{Down2}) and letting first
$x\rightarrow
\infty $ and then $\varepsilon \rightarrow 0$ justify by Lemma \ref%
{L_AfanTot} the equivalence
\begin{equation*}
\mathbb{P}\left( B_{\mathrm{T}}>x\right) \sim \mathbb{P}\left( A_{\mathrm{T}%
}>x\right) \sim \frac{K_{0}}{\log x}.
\end{equation*}%
Finally,%
$$
\mathbb{P}\left( B_{\mathrm{T}}>x\right) \leq\mathbb{P}\left( L_{\mathrm{T%
}}>x\right) \leq \mathbb{P}\left( \mathrm{T}B_{\mathrm{T}}>x\right) \\
\leq\mathbb{P}\left( B_{\mathrm{T}}>x^{1-\varepsilon }\right) +\mathbb{P}%
\left( \mathrm{T}>x^{\varepsilon }\right),
$$
and applying (\ref{koz}) and Lemma \ref{L_AfanTot} proves the
first equivalence in (\ref{AsTotY}).

One may check (\ref{AsTotYj}) by similar arguments.
\end{proof}

\begin{corollary}
\label{C_asExtTot}If conditions (\ref{RandCrit}) and (\ref{Aff})
are valid and $\mathbb{P}\left( \theta _{1}>0\right) =1,$ then, as
$n\rightarrow \infty
$%
\begin{equation*}
F(n):=\mathbb{E}\left[ 1-\exp \left\{ -\sum_{i=1}^{N}L_{\mathrm{T}%
i}Q_{n}^{(i,N)}(\mathbf{0})\right\} \right] \sim
\frac{2^{N-1}K_{0}}{\log n}.
\end{equation*}
\end{corollary}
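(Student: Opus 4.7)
The plan is to sandwich $\Lambda_n:=\sum_{i=1}^N L_{\mathrm{T},i}\,Q_n^{(i,N)}(\mathbf{0})$ between two multiples of its slowest-decaying term and then evaluate $F(n)=\mathbb{E}[1-e^{-\Lambda_n}]$ via a Laplace-asymptotic computation, combining Theorem \ref{T_Foster} with the tail bounds of Lemma \ref{L_ytotal}.

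Theorem \ref{T_Foster} gives $Q_n^{(i,N)}(\mathbf{0})\sim c_i n^{-2^{-(N-i)}}$, and the smallest of these exponents is $q:=2^{-(N-1)}$, attained at $i=1$. Since $L_{\mathrm{T},i}\le L_{\mathrm{T}}$ and $n^{-2^{-(N-i)}}\le n^{-q}$ for every $i\ge 1$, one can pick constants $0<c_-<C_+$ such that, for all sufficiently large $n$,
\begin{equation*}
c_-\,L_{\mathrm{T},1}\,n^{-q}\;\le\;\Lambda_n\;\le\;C_+\,L_{\mathrm{T}}\,n^{-q}.
\end{equation*}
The monotonicity of $x\mapsto 1-e^{-x}$ then yields
\begin{equation*}
\mathbb{E}\bigl[1-e^{-c_- L_{\mathrm{T},1} n^{-q}}\bigr]\;\le\;F(n)\;\le\;\mathbb{E}\bigl[1-e^{-C_+ L_{\mathrm{T}} n^{-q}}\bigr].
\end{equation*}

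The key step is then a Laplace-asymptotic lemma: if $X\ge 0$ satisfies $\mathbb{P}(X>x)\sim K_0/\log x$ as $x\to\infty$, then for any constant $a>0$ one has $\mathbb{E}[1-e^{-an^{-q}X}]\sim K_0/(q\log n)$. I would derive this from the identity $\mathbb{E}[1-e^{-\alpha X}]=\int_0^\infty \alpha e^{-\alpha z}\mathbb{P}(X>z)\,dz$ by substituting $y=\alpha z$ with $\alpha=an^{-q}$ and observing that $\log(y/\alpha)=q\log n+\log y-\log a\sim q\log n$ uniformly for $y\ge\varepsilon$, so that dominated convergence produces the stated limit. Lemma \ref{L_ytotal} supplies this tail hypothesis with the \emph{same} constant $K_0$ for both $X=L_{\mathrm{T},1}$ (using $\mathbb{P}(\theta_1>0)=1$) and $X=L_{\mathrm{T}}$ (using $\Theta_1\ge\theta_1>0$ a.s.), so both sandwich bounds are asymptotic to $K_0/(q\log n)=2^{N-1}K_0/\log n$, and squeezing completes the proof. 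The main delicate point is the Laplace step: near $y=0$ the tail $\mathbb{P}(X>y/\alpha)$ is close to $1$ rather than to $K_0/\log(y/\alpha)$, but the corresponding contribution is bounded by $\int_0^\varepsilon \alpha e^{-\alpha z}\,dz\le\varepsilon$, which is rendered harmless by letting $\varepsilon\to 0$ after $n\to\infty$; the prefactors $c_-,C_+$ disappear from the leading order because $-\log a$ is absorbed by the $o(\log n)$ error.
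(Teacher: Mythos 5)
Your overall strategy is exactly the paper's: sandwich $\sum_{i=1}^{N}L_{\mathrm{T}i}Q_{n}^{(i,N)}(\mathbf{0})$ between $c_-L_{\mathrm{T}1}n^{-q}$ and $C_+L_{\mathrm{T}}n^{-q}$ with $q=2^{-(N-1)}$ (using (\ref{SurvivSingle}) and the fact that the $i=1$ term decays slowest), then feed both bounds into the Laplace asymptotics $\mathbb{E}[1-e^{-\lambda X}]\sim K_0/\log(1/\lambda)$ supplied by Lemma \ref{L_ytotal}, with the same constant $K_0$ for $L_{\mathrm{T}1}$ and $L_{\mathrm{T}}$ because $\mathbb{P}(\theta_1>0)=1$ forces $\mathbb{P}(\Theta_1>0)=1$. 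The one place you depart from the paper is that you prove the Abelian step by hand, whereas the paper simply cites the Tauberian theorem \cite[Ch.~XIII.5, Theorem 4]{Fe2} applied to $\lambda^{-1}\mathbb{E}[1-e^{-\lambda L_{\mathrm{T}}}]=\int_0^\infty\mathbb{P}(L_{\mathrm{T}}>x)e^{-\lambda x}\,dx$, and it is precisely in that hand-made step that your argument has a gap.

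The problem is your treatment of the near-origin contribution. After the substitution $y=\alpha z$ you bound $\int_0^{\varepsilon}e^{-y}\,\mathbb{P}(X>y/\alpha)\,dy$ by $\varepsilon$ and claim this is ``rendered harmless by letting $\varepsilon\to0$ after $n\to\infty$.'' It is not: the quantity you are estimating, $K_0/(q\log n)$, tends to zero, so an error term equal to a \emph{fixed} constant $\varepsilon$ is not negligible relative to it — taking $\limsup_n$ of $(\log n)\,\mathbb{E}[1-e^{-\alpha X}]$ your bound contributes $\varepsilon\log n\to\infty$, and the asymptotic equivalence does not follow. The conclusion is still true, but you must show the near-origin piece is $o(1/\log n)+O(\varepsilon/\log n)$, not $O(\varepsilon)$. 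Two standard repairs: (i) let $\varepsilon=\varepsilon_n$ depend on $n$, e.g.\ $\varepsilon_n=(\log n)^{-2}$, which keeps $\log(\varepsilon_n/\alpha)\sim q\log n$ on $[\varepsilon_n,\infty)$ while the crude bound $\int_0^{\varepsilon_n}e^{-y}\,dy\le\varepsilon_n$ is now $o(1/\log n)$; or (ii) keep $\varepsilon$ fixed but use the tail bound down to $y=\alpha x_0$, noting that
\begin{equation*}
\int_{\alpha x_0}^{\varepsilon}\frac{dy}{\log(y/\alpha)}=\alpha\int_{x_0}^{\varepsilon/\alpha}\frac{du}{\log u}\sim\frac{\varepsilon}{\log(\varepsilon/\alpha)}\sim\frac{\varepsilon}{q\log n},
\end{equation*}
while $\int_0^{\alpha x_0}e^{-y}\,dy\le\alpha x_0=o(1/\log n)$; or simply invoke the Tauberian theorem as the paper does. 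With any of these the squeeze goes through and the proof is complete.
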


\begin{proof} Clearly,%
\begin{equation*}
L_{\mathrm{T}1}Q_{n}^{(1,N)}(\mathbf{0})\leq \sum_{i=1}^{N}L_{\mathrm{T}%
i}Q_{n}^{(i,N)}(\mathbf{0})\leq L_{\mathrm{T}}\sum_{i=1}^{N}Q_{n}^{(i,N)}(%
\mathbf{0})
\end{equation*}%
and, by (\ref{SurvivSingle})%
\begin{equation*}
\sum_{i=1}^{N}Q_{n}^{(i,N)}(\mathbf{0})\sim
Q_{n}^{(1,N)}(\mathbf{0})\sim c_{1}n^{-1/2^{(N-1)}}.
\end{equation*}%
To finish the proof of the corollary it remains to observe that
\begin{equation}
\mathbb{E}\left[ 1-e^{-\lambda L_{\mathrm{T}}}\right] \sim
\mathbb{E}\left[ 1-e^{-\lambda L_{\mathrm{T}1}}\right] \sim
\frac{K_{0}}{\log (1/\lambda )},\ \lambda \rightarrow +0,
\label{Lapla}
\end{equation}%
due to Lemma \ref{L_ytotal} and the Tauberian theorem \cite[Ch.
XIII.5, Theorem 4]{Fe2} applied, for instance, to the right hand
side of
\begin{equation*}
\lambda ^{-1}\mathbf{E}\left[ 1-e^{-\lambda L_{\mathrm{T}}}\right]
=\int_{0}^{\infty }\mathbf{P}\left( L_{\mathrm{T}}>x\right)
e^{-\lambda x}dx,
\end{equation*}%
and to use the inequalities%
\begin{equation*}
\mathbb{E}\left[ 1-\exp \left\{ -L_{\mathrm{T}1}Q_{n}^{(1,N)}(\mathbf{0}%
)\right\} \right] \leq F(n)\leq \mathbb{E}\left[ 1-\exp \left\{ -L_{\mathrm{T%
}}\sum_{i=1}^{N}Q_{n}^{(i,N)}(\mathbf{0})\right\} \right] .
\end{equation*}
\end{proof}
\begin{proof} of Theorem \ref{T_main}. We first check (\ref%
{As_survivN}). Notice that each particle of type $i$ of generation
$n$ has either a mother of type $0$ (of generation $n-1$), or an
ancestor of generation $k, 1\leq k <n$ whose mother is of type
$0;$\, recall that the number of particles of type $i$ of
generation $k$ having a mother of type $0$ is denoted by $Y_{ki}.$
By a decomposition of $Z_{ni}$ based on this fact and using the
branching property, we get:
\begin{equation}
\mathbb{E}\left[ 1-s_{1}^{Z_{n1}}...s_{N}^{Z_{nN}}\right]
=\mathbb{E}\left[
1-\prod\limits_{k=1}^{n}\prod\limits_{i=1}^{N}\left( H_{n-k}^{(i,N)}(%
\mathbf{s})\right) ^{Y_{ki}}\right]  \notag \\
=\mathbb{E}\left[ 1-e^{R(n;\mathbf{s})}\right], \label{BasicEqv}
\end{equation}%
where $H_0^{(i,N)}(\mathbf{s})= s_i$ by convention, and
\begin{equation*}
R(n;\mathbf{s})=\sum_{k=1}^{n}\sum_{i=1}^{N}Y_{ki}\log H_{n-k}^{(i,N)}(%
\mathbf{s}).
\end{equation*}%
In particular,
\begin{equation*}
\mathbb{P}\left( \mathbf{Z}_{n}\neq \mathbf{0}\right)
=\mathbb{E}\left[
1-e^{R(n;\mathbf{0})};\mathrm{T}\leq \sqrt{n}\right] +O\left( \mathbb{P}%
\left( \mathrm{T}>\sqrt{n}\right) \right) .
\end{equation*}%
Since $\log (1-x)\sim -x$ as $\ x\rightarrow +0$ and for $k\leq
\sqrt{n}$ and $n\rightarrow \infty $
\begin{equation*}
Q_{n}^{(i,N)}(\mathbf{0})=1-H_{n}^{(i,N)}(\mathbf{0})\leq Q_{n-k}^{(i,N)}(%
\mathbf{0})\leq Q_{n-\sqrt{n}}^{(i,N)}(\mathbf{0})=\left(
1+o(1)\right) Q_{n}^{(i,N)}(\mathbf{0}),
\end{equation*}%
we obtain%
\begin{eqnarray*}
&&\mathbb{E}\left[ e^{R(n;\mathbf{0})};\mathrm{T}\leq \sqrt{n}\right] =%
\mathbb{E}\left[ \exp \left\{ -\left( 1+o(1)\right)
\sum_{i=1}^{N}L_{ni}Q_{n}^{(i,N)}(\mathbf{0})\right\} ;\mathrm{T}\leq \sqrt{n%
}\right] \\
&&\qquad\qquad=\mathbb{E}\left[ \exp \left\{ -\left( 1+o(1)\right)
\sum_{i=1}^{N}L_{\mathrm{T}i}Q_{n}^{(i,N)}(\mathbf{0})\right\} ;\mathrm{T}%
\leq \sqrt{n}\right] \\
&&\qquad\qquad=\mathbb{E}\left[ \exp \left\{ -\left( 1+o(1)\right)
\sum_{i=1}^{N}L_{\mathrm{T}i}Q_{n}^{(i,N)}(\mathbf{0})\right\}
\right] -O\left( \mathbb{P}\left( \mathrm{T}>\sqrt{n}\right)
\right) .
\end{eqnarray*}%
Thus,%
\begin{equation}
\mathbb{P}\left( \mathbf{Z}_{n}\neq \mathbf{0}\right)
=\mathbb{E}\left[
1-\exp \left\{ -(1+o(1))\sum_{i=1}^{N}L_{\mathrm{T}i}Q_{n}^{(i,N)}(\mathbf{0}%
)\right\} \right] +O\left( \mathbb{P}\left(
\mathrm{T}>\sqrt{n}\right) \right),  \label{TrueSurv}
\end{equation}%
and (\ref{As_survivN}) follows from Corollary \ref{C_asExtTot} and (\ref{koz}%
).

Now we prove (\ref{LimSurvivN}). Recall that we always take $X_{0}=1,\mathbf{%
Z}_{0}=\mathbf{0}$.

Consider first the case $N=1$. Writing for simplicity $%
Y_{k}=Y_{k1},Z_{n}=Z_{n1}$, $s=s_{1}$ and $H_{n}(s)=H_{n}^{(1,1)}(s)=\mathbf{%
E}\left[ s^{Z_{n}}|Z_{0}=1\right] $ we have%
\begin{equation*}
\mathbb{E}\left[ s^{Z_{n}}|Z_{n}>0\right] =\frac{\mathbb{E}\left[ s^{Z_{n}}%
\right] -\mathbb{E}\left( Z_{n}=0\right) }{\mathbb{P}\left( Z_{n}>0\right) }%
=1-\frac{\mathbb{E}\left[ 1-s^{Z_{n}}\right] }{\mathbb{P}\left(
Z_{n}>0\right)},
\end{equation*}%
and by (\ref{BasicEqv})%
\begin{equation*}
\mathbb{E}\left[ 1-s^{Z_{n}}\right] =\mathbb{E}\left[ 1-\exp
\left\{ \sum_{k=1}^{n}Y_{k}\log H_{n-k}(s)\right\} \right] .
\end{equation*}%
By the criticality condition $ 1-H_{n}(0)\sim (b_{1}n)^{-1}. $
Thus, if $s=e^{-\lambda /\left( b_{1}n^{t}\right)},$ then
\begin{equation*}
1-s\sim \lambda /\left( b_{1}n^{t}\right) \sim 1-H_{\left[ n^{t}/\lambda %
\right] }(0),
\end{equation*}%
where $[x]$ denotes the integral part of $x$. Hence it follows that for any $t>1$ as $n\rightarrow \infty $%
\begin{equation*}
1-H_{n}\left( e^{\lambda /n^{t}}\right) \sim 1-H_{n}\left(
H_{\left[ n^{t}/\lambda \right] }(0)\right) =1-H_{n+\left[
n^{t}/\lambda \right] }\left( 0\right) \sim \lambda /\left(
b_{1}n^{t}\right) .
\end{equation*}%
This, similar to the previous estimates for the survival probability of the $%
(N+1)$-type branching process gives (recall that $\left(
X_{0},Z_{0}\right) =(1,0)$)
\begin{equation*}
\mathbb{E}\left[ 1-\exp \left\{ -\lambda Z_{n}/\left(
b_{1}n^{t}\right)
\right\} \right] \sim \mathbb{E}\left[ 1-\exp \left\{ -\lambda cn^{-t}L_{%
\mathrm{T}1}\right\} \right] \sim \frac{K_{0}}{t\log n}.
\end{equation*}%
Since $\mathbf{P}(Z_n>0)\sim K_0/ \log n,$ it follows that for any
fixed $t>1$ and $\lambda >0$
\begin{equation*}
\lim_{n\rightarrow \infty }\mathbb{E}\left[ \exp \left\{ -\lambda
Z_{n}/\left( b_{1}n^{t}\right) \right\} |Z_{n}>0\right]
=1-\frac{1}{t}.
\end{equation*}%
This implies that the conditional law of $Z_n/(b_1 n^t)$ given
$Z_n>0$ converges to the law of a random variable $X$ with
$\mathbf{P}(X=0)= 1-t^{-1}$
and $\mathbf{P}(X= +\infty) = t^{-1}.$ Therefore, for any $t>1$%
\begin{eqnarray}
G(t) &=&\lim_{n\rightarrow \infty }\mathbb{P}\left(
n^{-t}Z_{n}\leq
b_{1}|Z_{n}>0\right)  \notag \\
&=&\lim_{n\rightarrow \infty }\mathbb{P}\left( \frac{\log
Z_{n}}{\log n}\leq t\,\Big|\,Z_{n}>0\right) =1-\frac{1}{t}.
\label{Lott}
\end{eqnarray}%
Since $\lim_{t\downarrow 1}G(t)=0$ we may rewrite (\ref{Lott}) for
any $t>0$ as
\begin{equation}
\lim_{n\rightarrow \infty }\mathbb{P}\left( \frac{\log Z_{n}}{\log n}\leq t%
\Big|Z_{n}>0\right) =1-\frac{1}{1+\max \left( 0,t-1\right)},
\label{Lott1}
\end{equation}%
as desired.

Now we consider the case $N\geq 2$ and use the equality
\begin{eqnarray}
\mathbb{E}\left[ s_{1}^{Z_{n1}}...s_{N}^{Z_{nN}}|Z_{n1}>0\right] &=&\frac{%
\mathbb{E}\left[ 1-s_{2}^{Z_{n2}}...s_{N}^{Z_{nN}}I\left\{ Z_{n1}=0\right\} %
\right] }{\mathbb{P}\left( Z_{n1}>0\right) }  \notag \\
&&-\frac{\mathbb{E}\left[ 1-s_{1}^{Z_{n1}}...s_{N}^{Z_{nN}}\right] }{\mathbb{%
P}\left( Z_{n1}>0\right) }.  \label{EqDiff}
\end{eqnarray}

We study each term at the right-hand side of (\ref{EqDiff}) separately. By (%
\ref{BasicEqv}) and $\log (1-x)\sim -x,x\rightarrow +0$ we see that, as $%
n\rightarrow \infty $
\begin{equation}
\mathbb{E}\left[ 1-s_{1}^{Z_{n1}}...s_{N}^{Z_{nN}}\right]
=\mathbb{E}\left[ 1-\exp \left\{
-(1+o(1))R_{N}(n,\mathbf{s})\right\} \right],   \label{DefR}
\end{equation}%
where%
\begin{equation*}
R_{N}(n,\mathbf{s}):=\sum_{k=1}^{n}\sum_{i=1}^{N}Y_{ki}Q_{n-k}^{(i,N)}(%
\mathbf{s}).
\end{equation*}%
Let now $t_{1},...t_{N}$ be a tuple of positive numbers satisfying (\ref%
{RanT}). It follows from Lemma~\ref{L_multit} that, for $%
1-s_{l}=n^{-t_{l}},l=1,...,N$%
\begin{equation}
Q_{n}^{(i,N)}(\mathbf{s})\asymp n^{-\min_{i\leq l\leq N}\left(
t_{l}-l+i\right) }=n^{-i-\min_{i\leq l\leq N}\left(
t_{l}-l\right). } \label{Ba1}
\end{equation}%
Since
\begin{equation}
\min_{1\leq i\leq N}\min_{i\leq l\leq N}\left( t_{l}-l+i\right)
=\min_{1\leq l\leq N}\left( t_{l}-l+1\right)\geq 1  \label{tutu}
\end{equation}%
by our conditions, we have as $n\rightarrow \infty $:%
\begin{equation*}
Q_{n}^{(i,N)}(\mathbf{s})\ll Q_{n}^{(1,N)}(\mathbf{s})\asymp
n^{-\min_{1\leq l\leq N}\left( t_{l}-l+1\right) }.
\end{equation*}%
Thus, there exist constants $C_{j},j=1,2,3,4$ such that, on the set $\mathrm{%
T}\leq \sqrt{n}$ the estimates
\begin{equation*}
C_{1}L_{\mathrm{T}1}Q_{n}^{(1,N)}(\mathbf{s})\leq
R_{N}(n,\mathbf{s})\leq
\sum_{k=1}^{n}\sum_{i=1}^{N}Y_{ki}Q_{n-k}^{(i,N)}(\mathbf{s})\leq C_{2}L_{%
\mathrm{T}}\sum_{i=1}^{N}Q_{n}^{(i,N)}(\mathbf{s})
\end{equation*}%
are valid for all sufficiently large $n$. This, in turn, implies
\begin{equation}
C_{3}L_{\mathrm{T}1}n^{-\min_{1\leq l\leq N}\left(
t_{l}-l+1\right) }\leq R_{N}(n,\mathbf{s})\leq
C_{4}n^{-\min_{1\leq l\leq N}\left( t_{l}-l+1\right)
}L_{\mathrm{T}}.  \label{EstR_1}
\end{equation}%
Using the estimates above and (\ref{Lapla}) we get for the selected $%
t_{1},...,t_{N}$, as $n\rightarrow \infty $%
\begin{eqnarray*}
\mathbb{E}\left[ 1-\exp \left\{ -R_{N}(n,\mathbf{s})\right\}
;\mathrm{T}\leq
\sqrt{n}\right]  &=&\frac{1}{\log n}\frac{\left( 1+o(1)\right) K_{0}}{%
1+\min_{1\leq l\leq N}\left( t_{l}-l\right) } \\
&& \\
&&\qquad \qquad +\,O\left( \mathbb{P}\left(
\mathrm{T}>\sqrt{n}\right) \right) ,
\end{eqnarray*}%
which leads on account of (\ref{koz}) to
\begin{equation}
\lim_{n\rightarrow \infty }\left( \log n\right) \mathbb{E}\left[
1-s_{1}^{Z_{n1}}...s_{N}^{Z_{nN}}\right]
=\frac{K_{0}}{1+\min_{1\leq l\leq N}\left( t_{l}-l\right) }.
\label{Dranget}
\end{equation}%
Thus,%
\begin{equation*}
\lim_{n\rightarrow \infty }\frac{\mathbb{E}\left[
1-s_{1}^{Z_{n1}}...s_{N}^{Z_{nN}}\right] }{\mathbb{P}\left( Z_{n1}>0\right) }%
=\frac{1}{1+\min_{1\leq l\leq N}\left( t_{l}-l\right) }<1.
\end{equation*}%
Further,
\begin{equation*}
\mathbb{E}\left[ 1-s_{2}^{Z_{n2}}...s_{N}^{Z_{nN}}I\left\{ Z_{n1}=0\right\} %
\right] =\mathbb{E}\left[ 1-\exp \left\{
\sum_{k=1}^{n}\sum_{i=1}^{N}Y_{ki}\log H_{n-k}^{(i,N)}(0,\mathbf{s}%
_{2})\right\} \right] .
\end{equation*}%
By definitions of $H_{n}^{(i,N)}(\mathbf{s})$, estimates
(\ref{Ba1}) and the choice of $s_{i},i=2,...,N$ we have
\begin{equation*}
1-H_{n}^{(i,N)}(0,\mathbf{s}_{2})=1-H_{n}^{(i,N)}(\mathbf{s})=Q_{n}^{(i,N)}(%
\mathbf{s})\asymp n^{-\min_{i\leq l\leq N}\left( t_{l}-l+i\right)
}=o\left( n^{-1}\right) \text{.}
\end{equation*}%
Besides, as $n\rightarrow \infty $%
\begin{equation}
1-H_{n}^{(1,N)}(0,\mathbf{s}_{2})=Q_{n}^{(1,N)}(0,\mathbf{s}_{2})\sim
c_{1}n^{-1}  \label{AAA}
\end{equation}%
by Lemma \ref{L_singleterm}. Hence it follows that on the set
$T\leq \sqrt{n},$
\begin{eqnarray*}
\sum_{k=0}^{T-1}\sum_{i=1}^{N}Y_{ki}\log
H_{n-k}^{(i,N)}(0,\mathbf{s}_{2}) &=&-\left( 1+o(1)\right)
\sum_{k=0}^{T-1}\sum_{i=1}^{N}Y_{ki}Q_{n-k}^{(i,N)}(0,\mathbf{s}_{2}) \\
&=&-\left( 1+o(1)\right) \sum_{i=1}^{N}L_{\mathrm{T}i}Q_{n}^{(i,N)}(0,%
\mathbf{s}_{2})
\end{eqnarray*}%
and, moreover,%
\begin{equation*}
Q_{n}^{(1,N)}(0,\mathbf{s}_{2})L_{\mathrm{T}1}\leq \sum_{i=1}^{N}L_{\mathrm{T%
}i}Q_{n}^{(i,N)}(0,\mathbf{s}_{2})\leq C_{2}Q_{n}^{(1,N)}(0,\mathbf{s}%
_{2})L_{\mathrm{T}}.
\end{equation*}%
Using now the same line of arguments as earlier one may show that
\begin{equation*}
\lim_{n\rightarrow \infty }\mathbb{E}\left[
1-s_{2}^{Z_{n2}}...s_{N}^{Z_{nN}}I\left\{ Z_{n1}=0\right\} \right]
\log n=K_{0},
\end{equation*}%
implying by (\ref{As_survivN}) with $N=1$ that
\begin{equation*}
\lim_{n\rightarrow \infty }\frac{\mathbb{E}\left[
1-s_{2}^{Z_{n2}}...s_{N}^{Z_{nN}}I\left\{ Z_{n1}=0\right\} \right] }{\mathbb{%
P}\left( Z_{n1}>0\right) }=1.
\end{equation*}%
As a result, given (\ref{RanT}) we have
\begin{equation*}
G(t_{1},...,t_{N})=\lim_{n\rightarrow \infty }\mathbb{E}\left[
s_{1}^{Z_{n1}}...s_{N}^{Z_{nN}}|Z_{n1}>0\right]
=1-\frac{1}{1+\min_{1\leq l\leq N}\left( t_{l}-l\right) }.
\end{equation*}%
Since
$
\lim_{\min_{1\leq l\leq N}\left( t_{l}-l\right) \downarrow
0}G(t_{1},...,t_{N})=0
$
we conclude by the same arguments that have been used to derive
(\ref{Lott}) and (\ref{Lott1}) that
\begin{equation*}
\lim_{n\rightarrow \infty }\mathbb{E}\left[
s_{1}^{Z_{n1}}...s_{N}^{Z_{nN}}|Z_{n1}>0\right] =1-\frac{1}{1+\max
(0,\min_{1\leq l\leq N}\left( t_{l}-l\right) )}
\end{equation*}%
for all positive $t_{1},...,t_{N},$ completing the proof of
Theorem \ref{T_main}. \end{proof}

\section{The case of three types\label{Secthree}}

It follows from (\ref{SurvivSingle}) that for a strongly critical
$N$-type decomposable branching process in a constant environment
\begin{equation*}
\mathbf{P}\left( \mathbf{Z}_{n}\neq \mathbf{0\,}|\,\mathbf{Z}_{0}=\mathbf{e}%
_{1}\right) \sim \mathbf{P}\left( Z_{n1}+...+Z_{n,N-1}=0,Z_{nN}>0\mathbf{\,}%
|\,\mathbf{Z}_{0}=\mathbf{e}_{1}\right) .
\end{equation*}%
Thus, given the condition $\left\{ \mathbf{Z}_{n}\neq
\mathbf{0}\right\} $ we observe in the limit, as $n\rightarrow
\infty $ only type $N$ particles. This is not the case for the
strongly critical $\left( N+1\right) $-type decomposable branching
process in a random environment. We justify this claim by
considering a strongly critical branching process with three types
and prove the following statement.

\begin{theorem}
\label{T_twodim}Let $N=2$. If hypothesis A is valid then%
\begin{equation}
\lim_{n\rightarrow \infty }\mathbb{P}\left( \frac{\log
Z_{n1}}{\log n}\leq
t_{1},\frac{\log Z_{n2}}{\log n}\leq t_{2}\Big|\mathbf{Z}_{n}\neq \mathbf{0},%
\text{ }X_{0}=1,\mathbf{Z}_{0}=\mathbf{0}\right) =A(t_{1},t_{2}),
\label{Distrib}
\end{equation}%
where%
\begin{equation*}
A(t_{1},t_{2})=\left\{
\begin{array}{cccc}
0, & \text{if} & t_{1}\in \lbrack 0,\infty ), & 0\leq t_{2}\leq 1; \\
1-t_{2}^{-1}, & \text{if} & t_{1}\in \lbrack 0,\infty ), & 1<t_{2}<2; \\
1/2, & \text{if} & 0\leq t_{1}<1, & t_{2}\geq 2; \\
1-\frac{1}{2}\frac{1}{1+\min \left( t_{1}-1,t_{2}-2\right) }, &
\text{if} &
t_{1}\geq 1, & t_{2}\geq 2.%
\end{array}%
\right.
\end{equation*}
\end{theorem}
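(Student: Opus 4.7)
The plan is to mirror the argument used for Theorem \ref{T_main} in the case $N=2$, but with the four-regime bound of Lemma \ref{L_twodimen} replacing the single bound of Lemma \ref{L_multit}. Fix $t_{1},t_{2}>0$ and set $s_{l}=1-n^{-t_{l}}$. Starting from the basic identity (\ref{BasicEqv}),
$$\mathbb{E}\bigl[1-s_{1}^{Z_{n1}}s_{2}^{Z_{n2}}\bigr]=\mathbb{E}\bigl[1-e^{R(n;\mathbf{s})}\bigr],$$
I would restrict to the event $\{\mathrm{T}\leq\sqrt{n}\}$ (whose complement contributes $O(n^{-1/2})=o(1/\log n)$ by (\ref{koz})), replace $\log H_{n-k}^{(i,2)}(\mathbf{s})$ by $-(1+o(1))Q_{n-k}^{(i,2)}(\mathbf{s})$, and use $Q_{n-k}^{(i,2)}(\mathbf{s})=(1+o(1))Q_{n}^{(i,2)}(\mathbf{s})$ uniformly for $k\leq\sqrt{n}$, exactly as in the derivation of (\ref{TrueSurv}), to reduce matters to the asymptotic analysis of
$$\mathbb{E}\Bigl[1-\exp\bigl\{-(1+o(1))\bigl(L_{\mathrm{T}1}Q_{n}^{(1,2)}(\mathbf{s})+L_{\mathrm{T}2}Q_{n}^{(2,2)}(s_{2})\bigr)\bigr\}\Bigr].$$

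Next I would identify the dominant summand. Lemma \ref{L_twodimen} gives $Q_{n}^{(1,2)}(\mathbf{s})\asymp n^{-\alpha}$ with $\alpha\in\{1/2,\,t_{2}/2,\,1,\,1+\min(t_{1}-1,t_{2}-2)\}$ in the four regimes of the theorem, while the one-dimensional analogue (iterating $H_{n}^{(2,2)}(H_{m}^{(2,2)}(0))=H_{n+m}^{(2,2)}(0)$ with $m\sim c\,n^{t_{2}}$ exactly as in the proof of Lemma \ref{L_twodimen}) yields $Q_{n}^{(2,2)}(s_{2})\asymp n^{-\max(1,t_{2})}$. A direct check shows $\alpha\leq\max(1,t_{2})$ in each case, so the first summand is at least as large as the second. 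Since $L_{\mathrm{T}1}$, $L_{\mathrm{T}2}$ and $L_{\mathrm{T}}$ all have tail $K_{0}/\log x$ by Lemma \ref{L_ytotal}, the Tauberian argument behind (\ref{Lapla}) combined with the sandwich
$$\mathbb{E}\bigl[1-e^{-\lambda_{1}L_{\mathrm{T}1}}\bigr]\leq\mathbb{E}\bigl[1-e^{-\lambda_{1}L_{\mathrm{T}1}-\lambda_{2}L_{\mathrm{T}2}}\bigr]\leq\mathbb{E}\bigl[1-e^{-\max(\lambda_{1},\lambda_{2})L_{\mathrm{T}}}\bigr],$$
applied with $\lambda_{i}=Q_{n}^{(i,2)}$ and $\max(\lambda_{1},\lambda_{2})=\lambda_{1}\asymp n^{-\alpha}$, should give $\mathbb{E}[1-s_{1}^{Z_{n1}}s_{2}^{Z_{n2}}]\sim K_{0}/(\alpha\log n)$.

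Dividing by $\mathbb{P}(\mathbf{Z}_{n}\neq\mathbf{0})\sim 2K_{0}/\log n$ from (\ref{As_survivN}) with $N=2$, the conditional Laplace functional converges to $1-1/(2\alpha)=A(t_{1},t_{2})$. To deduce (\ref{Distrib}) itself, I would argue as in the passage from (\ref{Lott}) to (\ref{Lott1}): the computation above goes through unchanged if one uses $s_{l}=e^{-\mu_{l}/n^{t_{l}}}$ for any fixed $\mu_{1},\mu_{2}>0$, since $\mu_{l}$ only contributes an $O(1)$ shift to $\log(1/\lambda)=\alpha\log n$, so the limit $1-1/(2\alpha)$ is independent of $\mu_{1},\mu_{2}$. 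This forces the conditional law of $(Z_{n1}/n^{t_{1}},Z_{n2}/n^{t_{2}})$ given $\mathbf{Z}_{n}\neq\mathbf{0}$ to concentrate on $\{0,\infty\}^{2}$, putting mass $A(t_{1},t_{2})$ at $(0,0)$ and mass $1-A(t_{1},t_{2})$ on the complement, which identifies the CDF of the limit at $(t_{1},t_{2})$ and proves (\ref{Distrib}).

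The main obstacle is the last step, where one must verify that the leading constant of $Q_{n}^{(1,2)}(\mathbf{s})$ is genuinely $\mu$-independent (or at most depends on $\mu_{l}$ through a bounded multiplicative factor, which disappears inside $\log n$) in each of the four regimes, as well as along the boundary exponents $t_{2}\in\{1,2\}$ and $t_{1}=1$ where the case distinction switches. A secondary delicate point is the regime of very small $t_{2}$ in Case 1, where $Q_{n}^{(2,2)}(s_{2})\asymp n^{-1}$ but $Q_{n}^{(1,2)}(\mathbf{s})\asymp n^{-1/2}$ still dominates; the sandwich inequality above precisely handles this at leading order, at the price of bookkeeping that the subdominant term does not inflate the constant.
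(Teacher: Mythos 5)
Your proposal is correct and follows essentially the same route as the paper's proof: the reduction via (\ref{BasicEqv}) to $R_{2}(n,\mathbf{s})$ on $\{\mathrm{T}\leq\sqrt{n}\}$, the identification of $Q_{n}^{(1,2)}(\mathbf{s})\asymp n^{-C(t_{1},t_{2})}$ as the dominant term via Lemma \ref{L_twodimen} (the paper records this as $Q_{n}^{(1,2)}+Q_{n}^{(2,2)}\asymp Q_{n}^{(1,2)}$), the sandwich $C_{1}Q_{n}^{(1,2)}L_{\mathrm{T}1}\leq R_{2}\leq C_{2}Q_{n}^{(1,2)}L_{\mathrm{T}}$ combined with the Tauberian asymptotics (\ref{Lapla}), and division by (\ref{As_survivN}) with $N=2$. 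The only cosmetic difference is that the paper dispatches the regime $t_{1}\geq1$, $t_{2}\geq2$ by citing (\ref{Dranget}) directly, whereas you treat all four regimes uniformly; your closing concern about the $\mu$-dependence of constants is harmless since it only shifts $\log(1/\lambda)$ by $O(1)$.
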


\begin{remark} Since the survival probability of particles of
type $0$ up to moment $n$ is of order $n^{-1/2}$, particles of
this type are absent in the limit. \end{remark}

\begin{remark}  Since
$
\lim_{\min (t_{1},t_{2}-1)\downarrow 0}A(t_{1},t_{2})=0,
$
Theorem \ref{T_twodim} gives a complete description of the
limiting distribution for the left-hand side of (\ref{Distrib}).
\end{remark}

\begin{proof} of Theorem \ref{T_twodim}. We have
\begin{equation*}
\mathbb{E}\left[ s_{1}^{Z_{n1}}s_{2}^{Z_{n2}}|\mathbf{Z}_{n}\neq \mathbf{0}%
\right] =1-\frac{\mathbb{E}\left[ 1-s_{1}^{Z_{n1}}s_{2}^{Z_{n2}}\right] }{%
\mathbb{P}\left( \mathbf{Z}_{n}\neq \mathbf{0}\right)},
\end{equation*}%
where
\begin{equation*}
\mathbb{E}\left[ 1-s_{1}^{Z_{n1}}s_{2}^{Z_{n2}}\right]
=\mathbb{E}\left[
1-\exp \left\{ \sum_{k=1}^{n}\sum_{i=1}^{2}Y_{ki}\log H_{n-k}^{(i,N)}(%
\mathbf{s})\right\} \right].
\end{equation*}%
Let now $1-s_{i}=n^{-t_{i}}$. If $t_{1}\geq 1$ and $t_{2}\geq 2$ then by (\ref%
{As_survivN}) (with $N=2$) and (\ref{Dranget}) we have
\begin{equation*}
A(t_{1},t_{2})=1-\lim_{n\rightarrow \infty }\frac{\mathbb{E}\left[
1-s_{1}^{Z_{n1}}s_{2}^{Z_{n2}}\right] }{\mathbb{P}\left(
\mathbf{Z}_{n}\neq \mathbf{0}\right)
}=1-\frac{1}{2}\frac{1}{1+\min \left( t_{1}-1,t_{2}-2\right)},
\end{equation*}%
proving Theorem \ref{T_twodim} for $\min \left(
t_{1}-1,t_{2}-2\right)\geq 0$. Observe that
$$ \lim_{\min \left(
t_{1}-1,t_{2}-2\right) \downarrow 0}A(t_{1},t_{2})=1/2, $$
 and,
therefore, contrary to the case $\mathbb{P}\left( Z_{n1}>0\right)
$ we
need to analyze the case of positive $t_{1},t_{2}$ meeting the condition $%
\min \left( t_{1}-1,t_{2}-2\right) <0$ in more detail.

The same as in the proof of Theorem \ref{T_main}, it is necessary
to obtain estimates from above and below for
\begin{equation*}
R_{2}(n,\mathbf{s})=\sum_{k=1}^{n}\sum_{i=1}^{2}Y_{ki}Q_{n-k}^{(i,2)}(%
\mathbf{s})
\end{equation*}%
\ given $T\leq \sqrt{n}$. Observe that in view of Lemma
\ref{L_twodimen} and the representation
\begin{equation*}
Q_{n}^{(2,2)}(s_{2})=1-H_{n}^{(2,2)}\left( s_{2}\right) \asymp \frac{1}{%
n^{t_{2}}+n},
\end{equation*}
we have%
\begin{equation*}
1-H_{n}^{(1,2)}(s_{1},s_{2})+1-H_{n}^{(2,2)}(s_{2})\asymp
1-H_{n}^{(1,2)}(s_{1},s_{2})=Q_{n}^{(1,2)}(s_{1},s_{2}).
\end{equation*}%
This, in turn, yields for $T\leq \sqrt{n}$%
\begin{equation*}
C_{1}Q_{n}^{(1,2)}(s_{1},s_{2})L_{\mathrm{T}1}\leq
R_{2}(n,\mathbf{s})\leq
C_{2}Q_{n}^{(1,2)}(s_{1},s_{2})L_{\mathrm{T}}.
\end{equation*}%
From this estimate, (\ref{Lapla}) and Lemma \ref{L_twodimen} we get as $%
n\rightarrow \infty $
\begin{equation*}
\mathbb{E}\left[ 1-s_{1}^{Z_{n1}}s_{2}^{Z_{n2}}\right] \sim \frac{K_{0}}{%
C(t_{1},t_{2})}\log n,
\end{equation*}%
where
\begin{equation*}
C(t_{1},t_{2})=\left\{
\begin{array}{ccc}
1/2 & \text{if} & t_{1}\in \left( 0,\infty \right) ,0<t_{2}\leq 1; \\
t_{2}/2 & \text{if} & t_{1}\in \left( 0,\infty \right) ,1<t_{2}< 2; \\
1 & \text{if} & 0<t_{1}<1,t_{2}\geq 2; \\
1+\min \left( t_{1}-1,t_{2}-2\right) & \text{if} & t_{1}\geq 1,t_{2}\geq 2.%
\end{array}%
\right.
\end{equation*}%
Since $\mathbb{P}\left( \mathbf{Z}_{n}\neq \mathbf{0}\right) \sim
2K_{0}\left( \log n\right) ^{-1}$ for $N=2$, we conclude that for
positive $t_{1}$ and $t_{2}$%
\begin{eqnarray*}
\lim_{n\rightarrow \infty }\mathbb{E}\left[ s_{1}^{Z_{n1}}s_{2}^{Z_{n2}}\Big|%
\mathbf{Z}_{n}\neq \mathbf{0},\text{ }X_{0}=1,\mathbf{Z}_{0}=\mathbf{0}%
\right] &=&1-\lim_{n\rightarrow \infty }\frac{\mathbb{E}\left[
1-s_{1}^{Z_{n1}}s_{2}^{Z_{n2}}\right] }{\mathbb{P}\left(
\mathbf{Z}_{n}\neq
\mathbf{0}\right) } \\
&=&1-\frac{1}{2C(t_{1},t_{2})}=A\left( t_{1},t_{2}\right) .
\end{eqnarray*}%
Hence the statement of Theorem \ref{T_twodim}\ follows in an
ordinary way. \end{proof}

\textbf{Acknowledgement}

 VV was supported in part by the Russian Foundation
for Basic Research Project N~14-01-00318 and by CNRS of France for
a scientific stay of three months in 2013, at LMBA, UMR 6205,
Univ. Bretagne-Sud, where the present work has essentially been
done.


%
%
%
%

\end{document}